\documentclass[sn-mathphys-num]{sn-jnl}
\usepackage{cancel}

\usepackage{setspace}
\usepackage{parskip} 

\usepackage{graphicx}%
\usepackage{multirow}%
\usepackage{amsmath,amssymb,amsfonts}%
\usepackage{amsthm}%
\usepackage{mathrsfs}%
\usepackage[title]{appendix}%
\usepackage{xcolor}%
\usepackage{textcomp}%
\usepackage{manyfoot}%
\usepackage{booktabs}%
\usepackage{algorithm}%
\usepackage{algorithmicx}%
\usepackage{algpseudocode}%
\usepackage{listings}%
\usepackage{lmodern}
\usepackage{placeins}

\usepackage{graphicx}
\usepackage{amsmath,amsfonts,amssymb,mathtools,amsthm}
\usepackage{url}
\usepackage{comment}
\usepackage{color}
\usepackage{algorithm}
\usepackage[noEnd=true]{algpseudocodex}
\usepackage{float}
\usepackage[shortlabels]{enumitem}
\usepackage{multirow}
\usepackage{diagbox}
\usepackage{hyperref}
\usepackage[noabbrev,capitalise]{cleveref}

\usepackage{epstopdf}
\usepackage[normalem]{ulem}

\crefformat{algorithm}{Algorithm~#2#1#3}
\crefrangeformat{algorithm}{Algorithms~#3#1#4 to~#5#2#6}
\crefmultiformat{algorithm}{Algorithms~#2#1#3}%
       { or~#2#1#3}{, #2#1#3}{ or~#2#1#3}

\usepackage{tabularray}
\UseTblrLibrary{varwidth}
\DefTblrTemplate{caption-tag}{default}{\textbf{Table\hspace{0.25em}\thetable}}
\DefTblrTemplate{caption-sep}{default}{:\enskip}
\usepackage[font=small]{caption}

\def\tto{\;{\lower 1pt \hbox{$\rightarrow$}}\kern -10pt
	\hbox{\raise 2pt \hbox{$\rightarrow$}}\;}

\def\Hat{\widehat}

\def\R{\mathbb{R}}

\def\gph{\mbox{\rm gph}\,}

\def\dom{\mbox{\rm dom}}

\newtheorem{theorem}{Theorem}[section]
\newtheorem{proposition}[theorem]{Proposition}
\newtheorem{lemma}[theorem]{Lemma}

\theoremstyle{definition}
\newtheorem{definition}[theorem]{Definition}
\newtheorem{example}[theorem]{Example}

\newtheorem{remark}[theorem]{Remark}

\begin{document}

\title[Article Title]{The Existence and Stability of Generalized Multi-Source Weber Problems}


\author[1,2]{\sur{V. S. T.} \fnm{Long} }\email{vstlong@hcmus.edu.vn}

\author[3]{\sur{N. M.} \fnm{Nam} }\email{mnn3@pdx.edu}

\author[1,2]{\sur{T. H.} \fnm{Tai} }\email{tranhuutai13011@gmail.com}

\author[4]{\sur{C.} \fnm{Tammer} }\email{christiane.tammer@mathematik.uni-halle.de} 


\affil[1]{\orgdiv{Faculty of Mathematics and Computer Science}, \orgname{University of Science}, \orgaddress{\city{Ho Chi Minh City}, \country{Vietnam}}}

\affil[2]{\orgname{Vietnam National University}, \orgaddress{\city{Ho Chi Minh City}, \country{Vietnam}}}

\affil[3]{\orgdiv{Fariborz Maseeh Department of Mathematics and Statistics}, \orgname{Portland State University}, \orgaddress{\city{Portland}, \postcode{97207}, \state{Oregon}, \country{USA}}}

\affil[4]{\orgdiv{Martin-Luther Universität Halle-Wittenberg,} \orgname{Institute of Mathematics,
06099 Halle,} \country{Germany}}


\abstract{	This paper studies the generalized multi-source Weber problem with set-valued targets in the framework of minimal time functions. We first establish the existence of global and local optimal solutions and investigate several qualitative properties of the corresponding solution sets, including closedness, compactness, and conditions ensuring boundedness or unboundedness. Next, we derive explicit Lipschitz continuity properties for the objective function and the associated optimal value function with respect to perturbations of the target sets. We then introduce the global and local solution mappings and study their stability properties from the viewpoint of set-valued analysis. These results provide a quantitative and qualitative sensitivity analysis for the generalized multi-source Weber problem in the setting of minimal time functions.}

\keywords{Minimal time function, generalized multi-source Weber problem, existence of solutions, optimal value function, solution mapping.}


\pacs[MSC Classification]{68T10; 90C26; 90C31; 90C90.}

\maketitle

\section{Introduction}In the seventeenth century, Fermat posed to Torricelli the problem of finding a point that minimizes the sum of the distances to three given points. This classical problem, now referred to as the Fermat--Torricelli problem, is widely recognized as one of the cornerstones of location science. Beyond its geometric appeal, it has served as a fundamental source of inspiration for a large body of research on facility location and optimization. In particular, many important models have emerged from this line of study, including the multi-source Weber problem (also known as the clustering problem) and the $k$-center problem, which have found numerous applications in operations research, clustering, and data analysis; see, for example, \cite{Aggarwal2014,Bagirov2006,Bagirov2011,Gunther2016,Gunther2018,Jain2010,Kantardzic2011,Kuhn1973,Martini2002,mordukhovich2011applications,Nam2017,Nam2018,Ordin2015}.

In recent years, the multi-source Weber problem (MWP) has been extensively studied from both theoretical and algorithmic perspectives; see, for example, \cite{cuong2020qualitative,cuong2023global,CTWYOptim,Cuong2024}. It is worth emphasizing that MWP is, in general, both nonconvex and nonsmooth. These intrinsic difficulties necessitate the development of specialized optimization techniques, such as DC (difference-of-convex) programming and smoothing methods; see, for example, \cite{AragonArtacho2022,TA1,Tao1986}. At the same time, the $k$-center problem has been generalized from the Euclidean norm setting to the framework of Minkowski gauge functions; see, for example, \cite{LNSYkcenter,LNTV}.

On the other hand, the class of minimal time functions has been extensively studied by various authors (see, e.g., \cite{
	colombo2004subgradient,colombo2010well,durea2016minimal,he2006subdifferentials,jiang2012subdifferential,mordukhovich2010limiting,mordukhovich2011subgradients,wolenski1998proximal} and the references therein) due to its applications in optimization and control. A significant feature of these functions is that they can be viewed as generalized distance functions. Beginning with \cite{nam2013variational}, minimal time functions have been increasingly used in the study of location problems and, more broadly, in variational analysis and optimization. Their applications include directional metric regularity, stability of directional regularity, directional error bounds, and directional Levitin--Polyak well-posedness; see, for instance, \cite{burlicua2023directional,cibulka2020stability,durea2017new}. In addition, this framework has been extended to set-valued constrained optimization through the notion of directional Pareto solutions, which generalizes the classical concept of Pareto efficiency; see \cite{chelmucs2019directional,chelmucs2021stability,cibulka2020stability,chelmus2022exact}. Further results on optimality conditions along this line can be found in \cite{ait2022strict}.

Inspired by the aforementioned developments, we employ the minimal time function as a replacement for the norm in the multi-source Weber problem. We then investigate the existence and stability of global and local optimal solutions.

The paper is organized as follows. In the next section, we present some basic concepts in convex analysis and formulate our problem. In Section \ref{au}, we provide several auxiliary results required for the main analysis. Section \ref{exist} establishes the existence of global and local solutions and explores various properties of the solution sets. In Section \ref{lip}, we study the Lipschitz continuity of the objective and optimal value functions. In Section \ref{solu}, we investigate the stability of solution mappings via upper and inner semicontinuity. The final section provides some concluding remarks.  

Throughout this paper, the Euclidean space $\mathbb{R}^d$ ($d \in \mathbb{N}$) is equipped with the standard inner product $\langle x, y \rangle = \sum_{i=1}^{d} x_i y_i$ and its associated norm $\|x\| = \sqrt{\langle x, x \rangle}$. For any $a \in \mathbb{R}^d$ and $r \geq 0$, the closed and open Euclidean balls centered at $a$ with radius $r$ are denoted by $\mathbb{B}[a; r]$ and $\mathbb{B}(a; r)$, respectively.

For vectors $x=(x^1,\dots,x^k)$ and $y=(y^1,\dots,y^k)$ in the product space $(\mathbb{R}^d)^k=\mathbb{R}^{dk}$, where vectors $x^j,y^j\in\mathbb{R}^d$ for all $j=1,\dots,k$, we use the Euclidean norm
\[
\|x-y\|=\left(\sum_{j=1}^k \|x^j-y^j\|^2\right)^{1/2},
\]
which coincides with the Frobenius norm when $x$ and $y$ are identified with $d\times k$ matrices.

\section{Preliminaries} \label{pre}

This section presents several basic notions and formulates the problem under consideration. For a more comprehensive treatment of the relevant concepts from convex analysis, we refer the reader to \cite{Bauschke,mordukhovich2023easy}.

Unless otherwise specified, the set $F \subset \mathbb{R}^d$ is assumed to be a nonempty compact convex set containing the origin in its interior, i.e., $0 \in \operatorname{int}(F)$, where $0$ denotes the origin of $\mathbb{R}^d$. 
Recall that the \emph{polar set} of $F$ is defined by
\[
F^\circ = \{y \in \mathbb{R}^d \mid \langle y,x\rangle \le 1 \text{ for all } x \in F\}.\]
Set
\[
\|F\| = \sup\{\|x\| \mid x \in F\}\quad\text{and}\quad 
\|F^\circ\| = \sup\{\|y\| \mid y \in F^\circ\}.
\]

A broad class of minimal time functions has been studied in  \cite{	colombo2004subgradient,colombo2010well,durea2016minimal,he2006subdifferentials,jiang2012subdifferential,mordukhovich2010limiting,mordukhovich2011subgradients,wolenski1998proximal} and the references therein. For our purposes, it suffices to consider the following special case.

\begin{definition}
	\label{T_definition}
	Let $\Omega \subset \mathbb{R}^d$ be a nonempty
	compact set (not necessarily convex). The \emph{minimal time function} associated with the sets $F$ and $\Omega$ is defined by
	\begin{equation}
		\label{minimaltime}
		T_F(x,\Omega) = \inf \left\{ t \geq 0 \mid (x + tF) \cap \Omega \neq \emptyset \right\}, \quad x \in \mathbb{R}^d.
	\end{equation}
	The set $F$ is called the \textit{dynamics set} and the set $\Omega$ is called the \textit{target set}.
\end{definition}

\begin{remark}
	\label{T_remark}
	
	\begin{enumerate}
		\item [(a)] Since $0\in {\rm int}(F)$, it follows from Proposition 2.2(i) in \cite{durea2016minimal} that $$T_F(x,\Omega)<+\infty\quad\text{for all }x\in \mathbb{R}^d.$$
		\item [(b)] If $\Omega=\{\omega\}$, then we write $T_F(x,\omega)$ for simplicity.
		\item [(c)] The minimal time function $T_F(\cdot,\Omega)$ admits several useful interpretations. 
		In particular, if $F=\mathbb{B}[0;1]$ (the closed unit ball in $\mathbb{R}^d$), then 
		$T_F(\cdot,\Omega)$ becomes the usual distance function to the set $\Omega$, i.e.,
		\[
		T_F(x,\Omega)=\inf\{\|x-y\| \mid y\in\Omega\}= \operatorname{dist}(x,\Omega).
		\]
		Hence, the minimal time function can be viewed as a natural generalization of the distance function with respect to a general dynamics set $F$.
	\end{enumerate}
\end{remark}

\begin{definition}
	\label{rho_definition}
	The function \( \rho_F : \mathbb{R}^d \to [0, +\infty) \), called the \emph{Minkowski gauge function} associated with the set \( F \subset \mathbb{R}^d \), is defined by
	\begin{equation}\label{gauge}
		\rho_F(x) = \inf \{ t \geq 0 \mid x \in tF \}, \quad x \in \mathbb{R}^d.
	\end{equation}
\end{definition}

Using the minimal time function defined in \eqref{minimaltime}, we now formulate the generalized location problem.
Let $I = \{1, \dots, m\}$ and $J_k= \{1, \dots, k\}$ with $m,k \in \mathbb{N}$. Let $\{\Omega^i\}_{i\in I}$ be a family of nonempty compact (not necessarily convex) subsets of $ \mathbb{R}^d$.  The goal is to find $k$ centers $x^1,\dots,x^k \in \mathbb{R}^d$ that minimize the total minimal time required to serve all target sets $\Omega^i$, where the service time is measured by the minimal time function. 
The corresponding optimization problem is formulated as
\begin{equation} 
	\label{Generalized_Multi_Source_Weber_Problem}
	\min \left\{ f_k(x) = \sum_{i\in I} \min_{j\in J_k} T_F(x^j,\Omega^i) \ \bigg| \ x=(x^1,\dots,x^k)\in\mathbb{R}^{dk} \right\}.
\end{equation}

To avoid trivial cases, we assume that

\begin{equation}
	\label{m_k_condition}
	m\ge 2 \quad \text{and} \quad 1\le k\le m .
\end{equation}

We denote the set of all global optimal solutions of problem \eqref{Generalized_Multi_Source_Weber_Problem} by

\begin{equation}
	\label{Generalized_ Multi_Source_Weber_Problem_Solutions}
	S_k= \left\{ y \in \mathbb{R}^{dk}\mid f_k(y)= \min_{x\in \mathbb{R}^{dk}} f_k(x) \right\} 
\end{equation}

\begin{remark}
	If $\Omega^i=\{a^i\}$ for all $i\in I$, then by \eqref{T_rho_relationship_1} we have
	\[
	T_F(x^j,\Omega^i)=\rho_F(a^i-x^j).
	\]
	In this case, problem \eqref{Generalized_Multi_Source_Weber_Problem} reduces to the formulation involving the Minkowski gauge function, which was studied in \cite{LNTV}.
	Moreover, if $F=\mathbb{B}[0,1]$, then
	\begin{equation*}
		\label{Preliminaries_3}
		T_F(x^j,\Omega^i)=\|x^j-a^i\|,
	\end{equation*}
	and hence problem \eqref{Generalized_Multi_Source_Weber_Problem} becomes the classical multi-source Weber problem, which has been extensively studied from both theoretical and numerical perspectives; see, e.g., \cite{cuong2020qualitative, cuong2023global, CTWYOptim, Cuong2024}. It is worth emphasizing that the problems studied in these papers belong to the class of DC programming problems. However, problem~\eqref{Generalized_Multi_Source_Weber_Problem} does not necessarily belong to this class and is therefore more general.
\end{remark}

\section{Auxiliary results}\label{au}

The following lemmas collects several basic properties of the minimal time function that will be used in the subsequent analysis; for further details, see \cite{long2021new,long2022invariant,long2023directional,mordukhovich2023easy}.

\begin{lemma}
	\label{T_rho_relationship}
	The relationship between $T_F(\cdot,\Omega)$ and the Minkowski gauge $\rho_F$ is given by
	\begin{equation}
		\label{T_rho_relationship_1}
		T_F(x,\Omega) = \inf \left\{ \rho_F(\omega - x) \mid \omega \in \Omega \right\}\;\text{ for } x \in \mathbb{R}^d,
	\end{equation}
	and consequently,
	\begin{equation}
		\label{T_rho_relationship_2}
		T_F(x,\Omega)=\inf\{T_F(x,\omega) \big| \, \omega \in \Omega\}.
	\end{equation}
\end{lemma}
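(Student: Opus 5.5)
The plan is to prove \eqref{T_rho_relationship_1} by showing the two infima coincide through a direct set equality, and then to read off \eqref{T_rho_relationship_2} by specializing to singletons.

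\textbf{Step 1: rewrite the admissible set.} For fixed $x\in\mathbb{R}^d$, the condition $(x+tF)\cap\Omega\neq\emptyset$ holds if and only if there is $\omega\in\Omega$ with $\omega-x\in tF$. Hence
\[
\{t\ge 0 \mid (x+tF)\cap\Omega\neq\emptyset\}=\bigcup_{\omega\in\Omega}\{t\ge 0\mid \omega-x\in tF\}.
\]
The infimum of a union is the infimum over $\omega$ of the infima of the pieces. Using Definition \ref{rho_definition}, the infimum of the piece indexed by $\omega$ is exactly $\rho_F(\omega-x)$. Taking the infimum over $\omega\in\Omega$ gives \eqref{T_rho_relationship_1}.

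\textbf{Step 2: the interchange of infima.} I would check this carefully via two inequalities.
\begin{itemize}
\item[($\ge$)] Any admissible $t$ lies in some piece, so $t\ge\rho_F(\omega-x)\ge\inf_{\omega}\rho_F(\omega-x)$.
\item[($\le$)] For each $\omega$ and each $\varepsilon>0$, there is $t<\rho_F(\omega-x)+\varepsilon$ with $\omega-x\in tF$. This $t$ is admissible for $T_F(x,\Omega)$, so $T_F(x,\Omega)\le\rho_F(\omega-x)+\varepsilon$.
\end{itemize}

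Finiteness of every quantity follows from $0\in\operatorname{int}(F)$, as in Remark \ref{T_remark}(a). That remark also guarantees each piece is nonempty, so no infimum is taken over an empty set. Neither compactness nor convexity of $\Omega$ is needed.

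\textbf{Step 3: the second formula.} Applying \eqref{T_rho_relationship_1} with $\Omega=\{\omega\}$ gives $T_F(x,\omega)=\rho_F(\omega-x)$. Substituting this back into \eqref{T_rho_relationship_1} yields \eqref{T_rho_relationship_2}.

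The only mildly delicate point is the $\varepsilon$-argument in Step 2. It can actually be avoided: since $F$ is compact and convex with $0\in F$, the infimum defining $\rho_F$ is attained. In that case, taking $t=\rho_F(\omega-x)$ itself is admissible, which I would note as an alternative.
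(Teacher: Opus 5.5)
Your proof is correct, and it departs from the paper only in how the first formula is justified. The paper gives no argument for \eqref{T_rho_relationship_1}; it simply invokes an external result (Theorem 6.19 of the cited convex analysis text). You instead derive it from the definitions. You write the admissible set $\{t\ge 0\mid (x+tF)\cap\Omega\neq\emptyset\}$ as the union over $\omega\in\Omega$ of the sets $\{t\ge 0\mid \omega-x\in tF\}$, and then interchange infima.

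For \eqref{T_rho_relationship_2} the two arguments coincide. Both use $T_F(x,\omega)=\rho_F(\omega-x)$ and substitute back. The paper reads this identity off from \eqref{minimaltime} and \eqref{gauge}, while you obtain it by specializing \eqref{T_rho_relationship_1} to a singleton.

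The citation buys brevity. Your version makes the lemma self-contained and shows that the identity is purely set-theoretic. It uses neither convexity nor compactness of $\Omega$, which is relevant here since the paper allows nonconvex targets. The assumption $0\in\operatorname{int}(F)$ serves only to make every quantity finite. Your side remark on attainment of the gauge is also right, and only closedness and boundedness of $F$ are actually used there.
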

\begin{proof}
	The representation \eqref{T_rho_relationship_1} follows from 
	\cite[Theorem~6.19]{mordukhovich2023easy}. 
	Combining \eqref{minimaltime}, \eqref{gauge}, and \eqref{T_rho_relationship_1}, 
	we obtain \eqref{T_rho_relationship_2}.
\end{proof}

\begin{lemma}
	\label{T_lemma} Consider the minimal time function defined in \eqref{minimaltime}. Then
	for all $x,y \in \mathbb{R}^d$, we have
	\begin{equation} \label{a}
		\frac{T_F(x,y)}{\|F^\circ\|} \leq \| x - y \| \leq \|F\| T_F(x,y),
	\end{equation}
	and consequently,
	\begin{equation}\label{b}
		T_F(x,y) \leq \|F\|\|F^\circ\|T_F(y,x).    
	\end{equation}
\end{lemma}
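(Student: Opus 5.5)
By Lemma \ref{T_rho_relationship} applied with $\Omega=\{y\}$, we have $T_F(x,y)=\rho_F(y-x)$. So the whole statement reduces to comparing the gauge $\rho_F(z)$ with the Euclidean norm $\|z\|$ for $z=y-x$. The plan is to prove the two-sided estimate
\[
\frac{\rho_F(z)}{\|F^\circ\|}\le \|z\|\le \|F\|\,\rho_F(z),\qquad z\in\mathbb{R}^d,
\]
and then deduce \eqref{b} by applying \eqref{a} twice.

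\textbf{Right inequality.} Set $t=\rho_F(z)$. Since $F$ is compact, convex and contains $0$, we have $z\in tF$; for $t=0$ this forces $z=0$ because $F$ is bounded. Writing $z=tu$ with $u\in F$ gives $\|z\|=t\|u\|\le t\|F\|$. The one point needing care is that the infimum in \eqref{gauge} is attained. This follows from the closedness of $F$: take $t_n\downarrow t$ with $z=t_nu_n$, $u_n\in F$, and pass to a convergent subsequence of $(u_n)$.

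\textbf{Left inequality.} Here I use the support-function description of the gauge, $\rho_F(z)=\sup\{\langle v,z\rangle\mid v\in F^\circ\}$. This is standard for closed convex $F$ with $0\in F$, by the bipolar theorem $F^{\circ\circ}=F$. Cauchy--Schwarz then gives $\rho_F(z)\le \|F^\circ\|\,\|z\|$. I must also note that $\|F^\circ\|<\infty$, which holds because $0\in\operatorname{int}F$ makes $F^\circ$ bounded.

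If I want to avoid the bipolar theorem, I would argue directly. Pick $r>0$ with $\mathbb{B}[0;r]\subset F$; then $\rho_F(z)\le \|z\|/r$. Next I would show $\|F^\circ\|\le 1/r$, since $\langle y, r y/\|y\|\rangle\le 1$ for $y\in F^\circ$. This only gives $\rho_F(z)\le\|z\|/r$ with $1/r\ge\|F^\circ\|$, which is the wrong direction, so it does not suffice. The sharp constant needs the supporting-hyperplane argument, and this is the main obstacle. For $z\neq0$ with $t=\rho_F(z)>0$, the point $z/t$ lies on the boundary of $F$. By separation there is $v$ with $\langle v,z/t\rangle=1\ge\langle v,w\rangle$ for all $w\in F$, so $v\in F^\circ$. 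Hence $t=\langle v,z\rangle\le\|F^\circ\|\|z\|$.

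\textbf{Inequality \eqref{b}.} By \eqref{a}, $T_F(x,y)\le\|F^\circ\|\|x-y\|$. Applying the right inequality of \eqref{a} with the roles of $x$ and $y$ swapped gives $\|x-y\|=\|y-x\|\le\|F\|T_F(y,x)$. Combining the two yields $T_F(x,y)\le\|F\|\|F^\circ\|\,T_F(y,x)$.
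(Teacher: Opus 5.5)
Your proof is correct and follows the paper's route: identify $T_F(x,y)=\rho_F(y-x)$, use the two-sided estimate $\rho_F(u)/\|F^\circ\|\le\|u\|\le\|F\|\rho_F(u)$, and obtain \eqref{b} by applying it to $u$ and $-u$. The only difference is that the paper cites this gauge--norm estimate from \cite[Proposition~2.1(c)]{colombo2004subgradient}, while you prove it directly via attainment of the gauge and the support-function (or supporting-hyperplane) representation; the normalization $\langle v,z/t\rangle=1$ there is legitimate because $0\in\operatorname{int}(F)$ makes the supporting functional strictly positive at boundary points.
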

\begin{proof}
	By Proposition 2.1(c) in \cite{colombo2004subgradient}, we have 
	\[
	\frac{\rho_F(u)}{\|F^\circ\|} \leq \|u\| \leq \|F\| \rho_F(u) \text{ for all } u \in \mathbb{R}^d.
	\]
	Applying this estimate to $u$ and $-u$ yields
	\[
	\rho_F(u) \le \|F^\circ\|\,\|u\|
	\le \|F\|\,\|F^\circ\|\,\rho_F(-u).
	\]
	Replacing $u$ with $y-x$ and using \eqref{T_rho_relationship_1}, we obtain \eqref{a} and \eqref{b}.
\end{proof}

\begin{lemma}\label{Preliminaries_2}
	Consider the minimal time function defined in \eqref{minimaltime}. Then the following properties hold:
	\begin{enumerate}
		\item [{\rm (a)}] $T_F(x,\Omega) = 0$ if and only if $x \in \Omega$.
		\item [{\rm (b)}] If $\Omega$ is convex, then $T_F(\cdot,\Omega)$ is convex. 
		\item [{\rm (c)}] $T_F(\cdot,\Omega)$ is $\|F^\circ\|$-Lipschitz continuous on $\mathbb{R}^d$.
		\item [{\rm (d)}] $T_F(x,y) \leq T_F(x,z)+T_F(z,y)$ for any $x,y,z \in \mathbb{R}^d$.
	\end{enumerate}
\end{lemma}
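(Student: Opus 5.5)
The plan is to reduce everything to the gauge representation \eqref{T_rho_relationship_1} of Lemma~\ref{T_rho_relationship}. Three standard facts about $\rho_F$ will carry the argument. Since $F$ is compact, convex and contains $0$ in its interior, $\rho_F$ is finite, positively homogeneous and subadditive, so it is convex. It vanishes only at $0$, by the lower estimate $\|u\|\le\|F\|\rho_F(u)$ recorded in the proof of Lemma~\ref{T_lemma}. Finally, $\rho_F(u)\le\|F^\circ\|\,\|u\|$ by the same proof.

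For (a), first suppose $x\in\Omega$. Taking $\omega=x$ in \eqref{T_rho_relationship_1} gives $T_F(x,\Omega)\le\rho_F(0)=0$. Conversely, suppose $T_F(x,\Omega)=0$. The map $\omega\mapsto\rho_F(\omega-x)$ is continuous, being Lipschitz by the upper estimate, and $\Omega$ is compact. Hence the infimum is attained at some $\bar\omega\in\Omega$ with $\rho_F(\bar\omega-x)=0$. Then $\|\bar\omega-x\|\le\|F\|\rho_F(\bar\omega-x)=0$, so $x=\bar\omega\in\Omega$.

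For (b), assume $\Omega$ is convex. Take $x_1,x_2\in\mathbb{R}^d$, $\lambda\in[0,1]$ and $\varepsilon>0$. Choose $\omega_1,\omega_2\in\Omega$ with $\rho_F(\omega_i-x_i)\le T_F(x_i,\Omega)+\varepsilon$. The point $\omega_\lambda=\lambda\omega_1+(1-\lambda)\omega_2$ lies in $\Omega$. Convexity of $\rho_F$ then gives
\[
T_F(\lambda x_1+(1-\lambda)x_2,\Omega)\le\rho_F\bigl(\lambda(\omega_1-x_1)+(1-\lambda)(\omega_2-x_2)\bigr)\le\lambda T_F(x_1,\Omega)+(1-\lambda)T_F(x_2,\Omega)+\varepsilon.
\]
Letting $\varepsilon\downarrow0$ gives convexity.

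For (c), take $x,y\in\mathbb{R}^d$ and $\omega\in\Omega$. Subadditivity gives
\[
\rho_F(\omega-x)\le\rho_F(\omega-y)+\rho_F(y-x)\le\rho_F(\omega-y)+\|F^\circ\|\,\|x-y\|.
\]
Taking the infimum over $\omega\in\Omega$ yields $T_F(x,\Omega)\le T_F(y,\Omega)+\|F^\circ\|\|x-y\|$. Swapping the roles of $x$ and $y$ gives the Lipschitz bound.

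For (d), take $x,y,z\in\mathbb{R}^d$. By \eqref{T_rho_relationship_1} with singletons, $T_F(x,y)=\rho_F(y-x)$. Subadditivity then gives
\[
\rho_F(y-x)\le\rho_F(z-x)+\rho_F(y-z)=T_F(x,z)+T_F(z,y).
\]

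The only step needing real care is the "only if" direction of (a). It relies on attainment of the infimum, which uses compactness of $\Omega$ together with positive definiteness of $\rho_F$. Everything else follows directly from convexity and subadditivity of the gauge.
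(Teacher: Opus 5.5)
Your proof is correct. The paper gives no proof of this lemma. It lists the four properties as known facts and refers the reader to the literature (the works of Long et al.\ and Mordukhovich--Nam), so there is no in-paper argument to match against.

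Your route is the natural self-contained one. Every item is derived from the representation \eqref{T_rho_relationship_1}, the sublinearity of $\rho_F$, and the two-sided estimate $\|u\|/\|F\|\le\rho_F(u)\le\|F^\circ\|\,\|u\|$ already imported in the proof of Lemma~\ref{T_lemma}. The lemma then rests only on facts the paper has already brought in, at the cost of a few lines the authors chose to outsource. Your version also shows which hypotheses each item uses: (a) needs $\Omega$ closed and $F$ bounded, (b) needs convexity of $\Omega$ and of $F$, and (c) and (d) need only sublinearity of $\rho_F$ (plus $\|F^\circ\|<\infty$, i.e.\ $0\in\operatorname{int}(F)$, for (c)).

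Two minor remarks on (a). First, the continuity of $\omega\mapsto\rho_F(\omega-x)$ uses subadditivity as well as the upper estimate; alternatively, a finite convex function is continuous. Second, attainment of the infimum is not actually needed. A minimizing sequence with $\rho_F(\omega_n-x)\to 0$ gives $\|\omega_n-x\|\to 0$ by the lower estimate, so $x\in\Omega$ follows from closedness of $\Omega$ alone.
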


We conclude this section with a result concerning the target set $\Omega$ and its enlargement.

\begin{lemma} 
	\label{G.O.Solution_1}
	Let $\Omega \subset \mathbb{R}^d$ be a nonempty bounded set and let $r>0$. 
	Then the enlargement of $\Omega$ defined by
	\begin{equation} 
		\label{enlargement}
		\Omega[r] = \left\{ x \in \mathbb{R}^d \mid T_F(x,\Omega) \le r \right\}
	\end{equation}
	is a compact subset of $\mathbb{R}^d$.
\end{lemma}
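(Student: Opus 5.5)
By Heine--Borel it suffices to show that $\Omega[r]$ is closed and bounded in $\mathbb{R}^d$. We get both properties from the earlier lemmas.

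\emph{Closedness.} By Lemma~\ref{Preliminaries_2}(c), the function $T_F(\cdot,\Omega)$ is $\|F^\circ\|$-Lipschitz on $\mathbb{R}^d$, hence continuous. Lemma~\ref{Preliminaries_2} is stated for compact $\Omega$, but the Lipschitz property follows directly from \eqref{T_rho_relationship_1} for any nonempty $\Omega$: by the subadditivity of $\rho_F$ and Lemma~\ref{T_lemma},
\[
\rho_F(\omega-x)\le \rho_F(\omega-y)+\rho_F(y-x)\le \rho_F(\omega-y)+\|F^\circ\|\,\|x-y\|.
\]
Taking the infimum over $\omega\in\Omega$ and exchanging $x$ and $y$ gives the estimate. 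Alternatively one may note that $T_F(\cdot,\Omega)=T_F(\cdot,\overline{\Omega})$. Either way, $\Omega[r]$ is the preimage of the closed set $(-\infty,r]$ under a continuous map, so it is closed.

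\emph{Boundedness.} Since $\Omega$ is bounded, there is $M>0$ with $\|\omega\|\le M$ for all $\omega\in\Omega$. Let $x\in\Omega[r]$. By \eqref{T_rho_relationship_2}, $T_F(x,\Omega)=\inf_{\omega\in\Omega}T_F(x,\omega)\le r$, so there is $\omega\in\Omega$ with $T_F(x,\omega)\le r+1$. The right inequality in \eqref{a} then gives
\[
\|x-\omega\|\le \|F\|\,T_F(x,\omega)\le \|F\|(r+1),
\]
and hence $\|x\|\le M+\|F\|(r+1)$. Thus $\Omega[r]$ is contained in a fixed ball, so it is bounded, and therefore compact.

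No step is a real obstacle. The one point needing care is the closedness argument: the continuity of $T_F(\cdot,\Omega)$ must be justified for a merely bounded $\Omega$, which is why it is derived above from the gauge representation rather than quoted from Lemma~\ref{Preliminaries_2}.
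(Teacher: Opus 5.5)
Your proof is correct and follows essentially the same route as the paper: closedness comes from the continuity of $T_F(\cdot,\Omega)$, and boundedness comes from the estimate $\|x-\omega\|\le\|F\|\,T_F(x,\omega)$ applied to a near-minimizing $\omega\in\Omega$. The differences are minor: you get the explicit bound $\|x\|\le M+\|F\|(r+1)$ directly instead of arguing by contradiction with a sequence $\|x_n\|\to\infty$, and you check that $T_F(\cdot,\Omega)$ is continuous for a merely bounded (not necessarily compact) $\Omega$, which the paper skips by citing Lemma~\ref{Preliminaries_2}(c).
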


\begin{proof}
	Since $\Omega$ is bounded, there exists $R>0$ such that 
	$\Omega \subset \mathbb{B}[0;R]$. 
	Observe that $\Omega[r]$ is closed because it is a sublevel set of the 
	continuous function $T_F(\cdot,\Omega)$; see Lemma~\ref{Preliminaries_2}(c). 
	To prove that $\Omega[r]$ is compact, it suffices to show that $\Omega[r]$ is bounded.
	Suppose to the contrary that $\Omega[r]$ is unbounded. 
	Then there exists a sequence $\{x_n\}\subset \Omega[r]$ such that
	\begin{equation}
		\label{key1}
		\lim_{n\to\infty}\|x_n\|=\infty .
	\end{equation}
	For every $n\in\mathbb{N}$, there exists $\omega_n\in\Omega$ such that
	\[
	\begin{aligned}
		T_F(x_n,\Omega)
		&\ge T_F(x_n,\omega_n)-\frac1n 
		\quad (\text{by } \eqref{T_rho_relationship_2})\\
		&\ge \frac{1}{\|F\|}\|\omega_n-x_n\|-\frac1n 
		\quad (\text{by }\eqref{a})\\
		&\ge \frac{1}{\|F\|}\big(\|x_n\|-\|\omega_n\|\big)-\frac1n\\
		&\ge \frac{1}{\|F\|}\big(\|x_n\|-R\big)-\frac1n
		\quad (\text{since } \omega_n\in\Omega\subset\mathbb{B}[0;R]).
	\end{aligned}
	\]
	
	Taking the limit as $n\to\infty$ and using \eqref{key1}, we obtain
	\[
	\lim_{n\to\infty}T_F(x_n,\Omega)
	\ge
	\lim_{n\to\infty}
	\left(
	\frac{1}{\|F\|}(\|x_n\|-R)-\frac1n
	\right)
	=\infty,
	\]
	which contradicts the fact that $T_F(x_n,\Omega)\le r$ for all $n$.
	Hence $\Omega[r]$ must be bounded. This completes the proof. 
\end{proof}

\section{Existence and properties of optimal solutions}\label{exist}
\subsection{Global Optimal Solutions}
In this section, we study the existence and properties of global optimal solutions to problem~\eqref{Generalized_Multi_Source_Weber_Problem}.
We first establish the existence of global optimal solutions for problem \eqref{Generalized_Multi_Source_Weber_Problem}.

\begin{theorem}
	\label{G.O.Solution_2}
	The set of global optimal solutions $S_k$ of problem \eqref{Generalized_Multi_Source_Weber_Problem} is nonempty and closed.
\end{theorem}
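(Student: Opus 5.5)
The plan is to show first that $f_k$ is continuous on $\mathbb{R}^{dk}$, which gives closedness of $S_k$ at once, and then to use a reduction argument for nonemptiness. The difficulty with existence is that $f_k$ is not coercive: a center $x^j$ that serves no target set can be sent to infinity without changing $f_k$. Instead of coercivity, we will show that every point can be replaced by one in a fixed compact set whose objective value is no larger.

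\textbf{Continuity and closedness.} By Lemma~\ref{Preliminaries_2}(c), each function $x^j\mapsto T_F(x^j,\Omega^i)$ is $\|F^\circ\|$-Lipschitz. A pointwise minimum of finitely many Lipschitz functions is Lipschitz, and so is a finite sum. Hence $f_k$ is Lipschitz on $\mathbb{R}^{dk}$ with respect to the product norm. Once we know $S_k\neq\emptyset$, write $\bar f=\min f_k$; then $S_k=f_k^{-1}(\{\bar f\})$ is the preimage of a closed set under a continuous map, so it is closed.

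\textbf{Nonemptiness.} Fix a reference point $\bar x=(\bar a,\dots,\bar a)$ with $\bar a\in\Omega^1$, and set $r=f_k(\bar x)$. Define the compact set
\[
K=\Big(\bigcup_{i\in I}\Omega^i[r]\Big)^k ,
\]
which is compact by Lemma~\ref{G.O.Solution_1}, since a finite union of compact sets is compact. Now take an arbitrary $x$ with $f_k(x)\le r$. Call a center $x^j$ \emph{active} if it attains $\min_{l} T_F(x^l,\Omega^i)$ for some $i$. An active center satisfies $T_F(x^j,\Omega^i)\le f_k(x)\le r$ for that $i$, because every summand of $f_k$ is nonnegative; hence $x^j\in\Omega^i[r]$. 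At least one center is active, since $m\ge1$. Replace every inactive center by some active center $x^{j_0}$. The minima $\min_j T_F(x^j,\Omega^i)$ cannot increase: each was attained at an active center, which is still present. The new point $\tilde x$ therefore lies in $K$ and satisfies $f_k(\tilde x)\le f_k(x)$.

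It follows that $\inf_{\mathbb{R}^{dk}} f_k=\inf_K f_k$, because points with $f_k(x)>r$ are beaten by $\bar x$, and $\bar x\in K$ since $T_F(\bar a,\Omega^1)=0$. By Weierstrass's theorem, applied to the continuous function $f_k$ on the compact set $K$, the infimum is attained. Hence $S_k\neq\emptyset$.

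\textbf{Main obstacle.} The step I expect to need the most care is the replacement argument: confirming that the minima over $j$ do not increase when inactive centers are moved. This holds because every minimizing index is kept. The rest is routine.
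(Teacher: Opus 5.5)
Your proof is correct. The closedness part coincides with the paper's (continuity of $f_k$), but your existence argument takes a genuinely different route.

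The paper fixes one target $\Omega^{i_0}$ and a point $\omega^{i_0}\in\Omega^{i_0}$. It minimizes over $(\Omega^{i_0}[r^{i_0}])^k$ with the specially tuned radius $r^{i_0}=\max_{i\in I}T_F(\omega^{i_0},\Omega^i)+\max_{\omega\in\bigcup_{i}\Omega^i}T_F(\omega,\omega^{i_0})$. It then uses the triangle inequality of Lemma~\ref{Preliminaries_2}(d) to show that every center with $T_F(x^j,\Omega^{i_0})>r^{i_0}$ can be replaced by $\omega^{i_0}$ so that $T_F(y^j,\Omega^i)\le T_F(x^j,\Omega^i)$ holds for \emph{all} $i\in I$ simultaneously.

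You instead pass first to the sublevel set $\{f_k\le f_k(\bar x)\}$. Nonnegativity of the summands traps every active center in $\bigcup_{i}\Omega^i[r]$, and you then put an active center in place of the inactive ones, which leaves every $\min_{j}T_F(x^j,\Omega^i)$ unchanged.

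Your route buys economy and generality. It never uses the triangle inequality or the constants $\|F\|,\|F^\circ\|$, only continuity, nonnegativity, and compactness of the enlargements. So it works for any sum of minima of nonnegative continuous functions with bounded sublevel sets.

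The paper's route buys a componentwise domination valid for every $x\in\mathbb{R}^{dk}$, not just those in a sublevel set, together with an explicit compact region around a single target. Your active-center observation is essentially the mechanism the paper itself uses later, in the proof of Theorem~\ref{G.O.Solutions_9}. There it shows that a far-away center of an optimal solution serves no target by bounding its term by $f_k$ and comparing with the all-equal reference point.

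One minor point: Lemma~\ref{G.O.Solution_1} is stated for $r>0$. If $r=f_k(\bar x)=0$, then $\Omega^i[0]=\Omega^i$ by Lemma~\ref{Preliminaries_2}(a), so $K$ is still compact, and $\bar x$ is then trivially optimal.
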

\begin{proof}
	Fix any \( i_0 \in I \) and choose \(\omega^{i_0} \in \Omega^{i_0} \). Define
	\begin{equation}\label{key2}
		t^{i_0} = \max_{i \in I} T_F(\omega^{i_0},\Omega^i),    
	\end{equation}
	and
	\begin{equation}\label{key3}
		r^{i_0} = t^{i_0}  + \max_{\omega \in \bigcup_{i \in I} \Omega^i} T_F(\omega,\omega^{i_0}) .
	\end{equation}
	By Lemma \ref{G.O.Solution_1}, the enlargement \( \Omega^{i_0}[r^{i_0}] \) is a compact set. Therefore, it follows from the Weierstrass extreme value theorem (see, e.g., \cite[Theorem 7.9(i)]{mordukhovich2023easy}) that the following optimization problem
	\[
	\min \left\{f_k(x^1,\dots,x^k)\;\Big{|}\; x^j \in \Omega^{i_0}[r^{i_0}] \text{ for all } j \in J_k \right\}
	\]
	admits at least one optimal solution \( \overline{x} = (\overline{x}^1,\dots,\overline{x}^k) \in \mathbb{R}^{dk} \) satisfying
	\[
	T_F(\overline{x}^j,\Omega^{i_0}) \leq r^{i_0} \quad \text{for all } j \in J_k.
	\]
	
	Next, we show that \( \overline{x} \) is a global optimal solution of problem \eqref{Generalized_Multi_Source_Weber_Problem}. Let \( x = (x^1,\dots, x^k) \in \mathbb{R}^{dk} \) be arbitrary. We consider two possible cases:
	\begin{enumerate}
		\item [(a)] \( T_F(x^j,\Omega^{i_0}) \leq r^{i_0} \) for all \( j\in J_k\). 
		\item [(b)]\( T_F(x^{j},\Omega^{i_0}) > r^{i_0} \) for some \( j \in J_k \).
	\end{enumerate}    
	In case (a), it is clear that $	f_k(\overline{x}) \leq f_k(x).$ Now suppose that case (b) holds.  Define the index set
	\begin{equation}
		\label{G.O.Solutions_1_1}
		L = \left\{j \in J_k \mid T_F(x^j,\Omega^{i_0}) > r^{i_0}\right\}
	\end{equation} 
	and construct a new vector $ y = (y^1,\dots,y^k) \in \mathbb{R}^{dk} $ by setting
	\begin{equation}\label{key5}
		y^j = \begin{cases}
			x^j  & \text{if } j \in J_k \setminus L,\\
			\omega^{i_0}  & \text{if } j \in L.
		\end{cases}  
	\end{equation}
	From \eqref{enlargement}, \eqref{key2} and \eqref{key3}, it follows that 
	$ y \in \Omega^{i_0}[r^{i_0}]$, and hence \begin{equation}\label{key66}
		f_k(\overline{x}) \leq f_k(y).
	\end{equation}
	For any fixed \( i \in I \), it is easy to see that 
	\begin{equation}\label{G.O.Solutions_1_2}
		T_F(y^j,\Omega^i) = T_F(x^j,\Omega^i) \quad \text{for every } j \in J_k \setminus L.
	\end{equation}
	Meanwhile, for every \( j \in L \) we have
	\[ \begin{aligned}
		T_F(y^j,\Omega^i) = T_F(\omega^{i_0},\Omega^i) 
		&\leq t^{i_0}\quad(\text{by \eqref{key2} and \eqref{key5})}\\[0.1in]
		&= r^{i_0} - \max_{\omega \in \bigcup_{i \in I} \Omega^i} T_F(\omega,\omega^{i_0}) \quad(\text{by \eqref{key3})} \\[0.1in]
		&\leq T_F(x^j,\Omega^{i_0}) - \max_{\omega \in \bigcup_{i \in I} \Omega^i} T_F(\omega,\omega^{i_0})\quad (\text{by \eqref{G.O.Solutions_1_1}})\\[0.1in]
		&\leq T_F(x^j,\omega^{i_0}) -  \max_{\omega \in \bigcup_{i \in I} \Omega^i} T_F(\omega,\omega^{i_0})\quad(\text{since }\omega^{i_0}\in \Omega^{i_0} ). 
	\end{aligned}
	\]
	Combining this with Lemma~\ref{Preliminaries_2}(d), we obtain that
	for every \(\omega^i\in\Omega^i\),
	\[ \begin{aligned}
		T_F(y^j,\Omega^i) 	&\leq T_F(x^j,\omega^i) + T_F(\omega^i,\omega^{i_0}) - \max_{\omega \in \bigcup_{i \in I} \Omega^i} T_F(\omega,\omega^{i_0}) \\[0.1in]
		&\leq T_F(x^j,\omega^i).  
	\end{aligned}
	\]
	Then~\eqref{T_rho_relationship_1} in Lemma \ref{T_rho_relationship} gives us 
	\[
	T_F(y^j,\Omega^i) \leq T_F(x^j,\Omega^i) \text{ for all } j \in L.
	\]
	This  together with \eqref{key66} and \eqref{G.O.Solutions_1_2} implies that
	\[
	f_k(\overline x)\leq f_k(y) = \sum_{i \in I} \min_{j \in J_k} T_F(y^j,\Omega^i) \leq \sum_{i \in I} \min_{j \in J_k} T_F(x^j,\Omega^i) = f_k(x).
	\]
	Since $x$ is arbitrary, we conclude that \( \overline{x} \in S_k\).  
	
	To verify the closedness of $S_k$, we put 
	$$\alpha=\min_{x\in \mathbb{R}^{dk}}f_k(x)$$
	and consider a sequence
	$\{y_n\} \subset S_k$ such that $y_n\to y \in \mathbb{R}^{dk}$. Since $f_k$ is continuous, we have
	\[
	f_k(y)=\lim_{n\to \infty}f_k\left(y_n\right)=\alpha,
	\]
	which yields  $y \in S_k$. Therefore, $S_k$ is closed. This completes the proof.
\end{proof}

One may naturally ask whether the set of global optimal solutions in 
Theorem~\ref{G.O.Solution_2} is compact. 
When $k=1$, the answer is affirmative, as shown in the following result.

\begin{proposition}
	\label{temp_label_1}
	If $k=1$, then the set of global optimal solutions $S_1$ is compact.
\end{proposition}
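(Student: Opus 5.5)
Since Theorem~\ref{G.O.Solution_2} already gives that $S_1$ is nonempty and closed, it remains only to prove that $S_1$ is bounded; compactness then follows from the Heine--Borel theorem. When $k=1$ the objective reduces to
\[
f_1(x)=\sum_{i\in I} T_F(x,\Omega^i),\quad x\in\mathbb{R}^d,
\]
so $S_1$ is exactly the sublevel set $\{x\in\mathbb{R}^d \mid f_1(x)\le \alpha\}$, where $\alpha=\min_{x\in\mathbb{R}^d} f_1(x)$.

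To see that $S_1$ is bounded, we bound it inside an enlargement of a single target set. Fix any $i_0\in I$. Every term $T_F(x,\Omega^i)$ is nonnegative, so for each $x\in S_1$ we get
\[
T_F(x,\Omega^{i_0})\le \sum_{i\in I}T_F(x,\Omega^i)=\alpha .
\]
This means $S_1\subset \Omega^{i_0}[\alpha]$. If $\alpha>0$, Lemma~\ref{G.O.Solution_1} says $\Omega^{i_0}[\alpha]$ is compact, since $\Omega^{i_0}$ is compact and hence bounded, and therefore $S_1$ is bounded. If $\alpha=0$, Lemma~\ref{G.O.Solution_1} does not apply directly because it requires $r>0$. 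In that case we use the inclusion $S_1\subset\Omega^{i_0}[1]$ instead, which holds because $\alpha=0\le 1$. Alternatively, Lemma~\ref{Preliminaries_2}(a) gives $S_1\subset\Omega^{i_0}$ directly. Either way $S_1$ is bounded.

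We conclude that $S_1$ is a closed subset of a compact set, and hence compact. There is no real obstacle in this argument. The only points requiring care are the degenerate case $\alpha=0$ and the observation that the argument depends on $k=1$: there the single center must be close to every target set. For $k\ge 2$ this fails, because an unused center can move off to infinity, which is why the proposition is restricted to $k=1$.
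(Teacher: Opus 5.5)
Your proof is correct and is essentially the paper's argument. The paper argues by contradiction: it picks $y^1\in S_1$ with $\|y^1\|\ge\|F\|(\alpha+M+2)$ and uses $\|x-y\|\le\|F\|\,T_F(x,y)$ from Lemma~\ref{T_lemma} to force $T_F(y^1,\Omega^i)>\alpha\ge T_F(y^1,\Omega^i)$, which amounts to redoing the proof of Lemma~\ref{G.O.Solution_1} inline, while your observation $S_1\subset\Omega^{i_0}[\max\{\alpha,1\}]$ cites that lemma directly and handles the case $\alpha=0$ explicitly.
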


\begin{proof}
	By Theorem~\ref{G.O.Solution_2}, the set $S_1$ is nonempty and closed. 
	Thus, it suffices to show that $S_1$ is bounded.
	Suppose on the contrary that $S_1$ is unbounded. 
	Then there exists $y^1\in S_1$ such that
	\begin{equation}
		\label{temp_label_1_1}
		\|y^1\| \ge \|F\|(\alpha+M+2),
	\end{equation}
	where
	\begin{equation}
		\label{temp_label_1_2}
		\alpha = f_1(y^1)=\min_{x\in\mathbb{R}^d} f_1(x)
	\end{equation}
	and
	\begin{equation}
		\label{temp_label_1_3}
		M=\frac{\sup\{\|\omega\|\mid \omega\in\cup_{i\in I}\Omega^i\}}{\|F\|}.
	\end{equation}
	
	Fix any $i\in I$. By~\eqref{T_rho_relationship_1}, there exists 
	$\omega^i\in\Omega^i$ such that
	\begin{equation}
		\begin{aligned}
			\label{mauthuan1}
			T_F(y^1,\Omega^i)
			&\ge T_F(y^1,\omega^i)-1 \\
			&\ge \frac{\|\omega^i-y^1\|}{\|F\|}-1 
			\quad (\text{by Lemma~\ref{T_lemma}(a)})\\
			&\ge \frac{\|y^1\|-\|\omega^i\|}{\|F\|}-1 \\
			&\ge \frac{\|F\|(\alpha+M+2)-\|\omega^i\|}{\|F\|}-1 
			\quad (\text{by \eqref{temp_label_1_1}})\\
			&\ge \alpha+M+2-M-1
			\quad (\text{by \eqref{temp_label_1_3}})\\
			&>\alpha .
		\end{aligned}
	\end{equation}
	However, by \eqref{temp_label_1_2} and the definition of $f_1$, we have
	\[
	\alpha = f_1(y^1) = \sum_{p \in I} T_F(y^1, \Omega^p) \geq T_F(y^1, \Omega^i),
	\]
	which contradicts \eqref{mauthuan1}.
	This means that $S_1$ must be bounded. Hence, $S_1$ is compact.
\end{proof}
\begin{remark}
	If each set $\Omega^i \subset \mathbb{R}^d$ is convex for all $i \in I$, then $T_F(\cdot,\Omega^i)$ is a convex function. According to Proposition 1.44 in \cite{mordukhovich2023easy}, the objective function
	\[ 
	f_1(\cdot)=\sum_{i \in I} T_F(\cdot,\Omega^i)
	\]
	is also a convex function. In this case, $S_1$ is a compact convex set.
\end{remark}

With $k\geq 2$, the set $S_k$ may fail to be compact. This phenomenon is illustrated by the following example.

\begin{example}
	\label{G.O.Solutions_4}
	Consider the case $m=k=2$ and $d=2$. Let
	\[
	\Omega^1=\mathbb{B}[(4,0);2], \quad
	\Omega^2=\mathbb{B}[(6,0);2], 
	\quad \text{and} \quad
	F=\mathbb{B}\!\left[\left(\tfrac12,0\right);1\right].
	\]
	Since
	\[
	f_2(x^1,x^2)\ge 0
	\quad\text{for all } (x^1,x^2)\in(\mathbb{R}^2)^2,
	\]
	and
	\[
	f_2\big((5,0),(x^2_1,x^2_2)\big)=0
	\quad\text{for all } (x^2_1,x^2_2)\in\mathbb{R}^2,
	\]
	the optimal value of problem
	\eqref{Generalized_Multi_Source_Weber_Problem} is equal to $0$.
	Hence
	\[
	\{((5,0),(x^2_1,x^2_2))\mid (x^2_1,x^2_2)\in\mathbb{R}^2\}
	\subset S_2.
	\]
	Consequently, $S_2$ is unbounded.
\end{example}

The next theorem provides a complete characterization of the compactness of $S_k$ when $k \ge 2$.

\begin{theorem}\label{G.O.Solutions_5} Consider problem \eqref{Generalized_Multi_Source_Weber_Problem} with $k\geq2$.
	Then, the following two statements are equivalent:
	\begin{enumerate}
		\item[{\rm (a)}] $S_k$ is compact.
		\item[{\rm (b)}] $\min\limits_{(x^1,\dots,x^k)\in \mathbb{R}^{dk}} f_k\left(x^1,\dots,x^k \right)
		<
		\min\limits_{(x^1,\dots,x^{k-1})\in \mathbb{R}^{d(k-1)}} f_{k-1}\left(x^1,\dots,x^{k-1} \right).$
	\end{enumerate}
	
\end{theorem}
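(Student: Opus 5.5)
We prove the two implications separately. Write $\alpha_k=\min f_k$ and $\alpha_{k-1}=\min f_{k-1}$; both exist by Theorem~\ref{G.O.Solution_2} applied with $k$ and $k-1$ centers. When $k-1\ge 1$ this is legitimate, and we note that $\alpha_k\le\alpha_{k-1}$ always. Indeed, if $(x^1,\dots,x^{k-1})\in S_{k-1}$, then appending any point $x^k$ gives $f_k(x^1,\dots,x^{k-1},x^k)\le f_{k-1}(x^1,\dots,x^{k-1})$, since the inner minimum runs over more indices.

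\textbf{(a)$\Rightarrow$(b).} We argue by contraposition. Suppose (b) fails; then $\alpha_k=\alpha_{k-1}$. Take $(\bar x^1,\dots,\bar x^{k-1})\in S_{k-1}$. For every $z\in\mathbb{R}^d$ we have
\[
\alpha_k\le f_k(\bar x^1,\dots,\bar x^{k-1},z)\le f_{k-1}(\bar x^1,\dots,\bar x^{k-1})=\alpha_{k-1}=\alpha_k .
\]
Hence $(\bar x^1,\dots,\bar x^{k-1},z)\in S_k$ for all $z$, so $S_k$ is unbounded and not compact. This is the argument of Example~\ref{G.O.Solutions_4} in general form.

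\textbf{(b)$\Rightarrow$(a).} By Theorem~\ref{G.O.Solution_2}, $S_k$ is closed, so it suffices to prove boundedness. Put $\delta=\alpha_{k-1}-\alpha_k>0$ and $R=\max\{\|\omega\|\mid \omega\in\bigcup_{i\in I}\Omega^i\}$. The plan is to show that every $y\in S_k$ satisfies $\|y^j\|\le \|F\|(\alpha_k+1)+R$ for every $j\in J_k$.

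Suppose instead that some $y\in S_k$ has a component $y^{j_0}$ exceeding this bound. Arguing as in Proposition~\ref{temp_label_1}, using \eqref{T_rho_relationship_1} and Lemma~\ref{T_lemma}(a), we get $T_F(y^{j_0},\Omega^i)\ge(\|y^{j_0}\|-R)/\|F\|>\alpha_k$ for every $i\in I$. (Compactness of $\Omega^i$ means the infimum is attained, so no $1/n$ slack is needed.)

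Now fix $i$. If $y^{j_0}$ attained $\min_j T_F(y^j,\Omega^i)$, that single term would exceed $\alpha_k=f_k(y)$, which is impossible because all terms are nonnegative. Therefore for each $i$ the minimum is attained by some $j\neq j_0$, and deleting $y^{j_0}$ leaves every term of the sum unchanged. The resulting vector $y'\in\mathbb{R}^{d(k-1)}$ then satisfies $f_{k-1}(y')=f_k(y)=\alpha_k<\alpha_{k-1}$, contradicting the definition of $\alpha_{k-1}$.

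Consequently $S_k$ is bounded, and hence compact. Notice that the strict inequality (b) is exactly what excludes a center that serves no target. The main point to check carefully is this last step: that a far-away center is strictly dominated for every $i$. That requires the strict inequality $T_F(y^{j_0},\Omega^i)>\alpha_k\ge\min_j T_F(y^j,\Omega^i)$, which holds by the choice of the radius bound.
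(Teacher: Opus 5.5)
Your proof is correct and follows essentially the same route as the paper. The paper also shows $\min f_k\le\min f_{k-1}$, obtains unboundedness under equality by appending an arbitrary $k$-th center to a minimizer of $f_{k-1}$, and in the other direction deletes a far-away center of a solution (whose minimal time to every $\Omega^i$ exceeds the optimal value) to get $f_{k-1}=f_k$ at the optimum. The differences are cosmetic: you argue both directions by contraposition, bound each component directly, and use attainment of the infimum on compact $\Omega^i$ instead of the paper's $1$-slack.
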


\begin{proof} Note first that $S_k$ is nonempty and closed by Theorem \ref{G.O.Solution_2}. Furthermore, for every $(x^1,\dots,x^k)\in \mathbb{R}^{dk}$ we have
	\begin{equation}\label{G.O.Solutions_5_1}
		\begin{aligned}
			f_k\left(x^1,\dots,x^k \right)
			&= \sum_{i \in I} \min_{j \in J_k} T_F(x^j,\Omega^i)\\
			&\leq \sum_{i \in I} \min_{j \in J_{k-1}} T_F(x^j,\Omega^i)\\&
			= f_{k-1}\left(x^1,\dots,x^{k-1}\right).
		\end{aligned}
	\end{equation}
	Consequently,
	\begin{equation}\label{G.O.Solutions_5_2}
		\min_{(x^1,\dots,x^k)\in \mathbb{R}^{dk}} f_k\left(x^1,\dots,x^k \right)
		\leq
		\min_{(x^1,\dots,x^{k-1})\in \mathbb{R}^{d(k-1)}} f_{k-1}\left(x^1,\dots,x^{k-1} \right).
	\end{equation}
	Therefore, it suffices to prove that the following two statements are equivalent:
	\begin{enumerate}
		\item [{\rm (a')}] $S_k$ is unbounded.
		\item [{\rm (b')}] $\min\limits_{(x^1,\dots,x^k)\in \mathbb{R}^{dk}} f_k\left(x^1,\dots,x^k \right)
		=
		\min\limits_{(x^1,\dots,x^{k-1})\in \mathbb{R}^{d(k-1)}} f_{k-1}\left(x^1,\dots,x^{k-1} \right).$    
	\end{enumerate}
	
	(a') $\Rightarrow$ (b'). Assume that \(S_k\) is unbounded. Then there exists \((y^1,\dots,y^k)\in S_k\) such that
	\[
	\sum_{j=1}^k \|y^j\|^2 \geq k(\|F\|(\alpha+M+2))^2
	\]
	where 
	\begin{equation}\label{key7}
		\alpha = f_k\left(y^1,\dots,y^k\right) = \min_{(x^1,\dots,x^k)\in \mathbb{R}^{dk}} f_k\left(x^1,\dots,x^k\right)    
	\end{equation}
	and
	\begin{equation*}\label{key77}
		M = \frac{\sup\{\|\omega\| \mid \omega \in \cup_{i\in I}\Omega^i\}}{\|F\|}.
	\end{equation*}
	Without loss of generality, we assume that
	\begin{equation}\label{key6}
		\|y^k\| \geq \|F\|(\alpha+M+2).    
	\end{equation}
	By \eqref{key7}, the definition of $f_k$ and arguments similar to those used in \eqref{mauthuan1}, we obtain
	\[T_F(y^k,\Omega^i)> \min_{j \in J_k}T_F(y^j,\Omega^i) \text{ for all } i \in I,
	\]
	which yields
	\begin{equation*}\label{G.O.Solutions_5_4}
		\min_{j \in J_k} T_F(y^j,\Omega^i)
		= \min_{j \in J_{k-1}} T_F(y^j,\Omega^i) \text{ for all } i \in I. 
	\end{equation*}
	Therefore, we have
	\begin{equation*} \label{G.O.Solutions_5_5}
		\begin{aligned}
			f_k\left(y^1,\dots,y^k\right)=\sum_{i \in I}\min_{j \in J_k}T_F(y^j,\Omega^i) 
			= \sum_{i \in I}\min_{j \in J_{k-1}}T_F(y^j,\Omega^i)=f_{k-1}\left(y^1,\dots,y^{k-1}\right).
		\end{aligned}
	\end{equation*}
	This together with \eqref{key7} implies that
	$$	\begin{aligned}
			\min_{(x^1,\dots,x^{k-1})\in\mathbb{R}^{d(k-1)}}f_{k-1}\left(x^1,\dots,x^{k-1} \right)&\leq f_{k-1}\left(y^1,\dots,y^{k-1}\right)\\
			&=f_k\left(y^1,\dots,y^k\right)\\
			&=\min_{(x^1,\dots,x^k)\in\mathbb{R}^{dk}}f_k\left((x^1,\dots,x^k) \right).
		\end{aligned}$$
	Combining this with \eqref{G.O.Solutions_5_2}, we obtain the equality in (b').

	(b') $\Rightarrow$ (a'). Suppose that (b') holds. By Theorem \ref{G.O.Solution_2},
	we can find $(y^1,\dots,y^{k-1})\in \mathbb{R}^{d(k-1)}$ such that
	\[
	f_{k-1}\left(y^1,\dots,y^{k-1}\right)
	=\min_{(x^1,\dots,x^{k-1})\in\mathbb{R}^{d(k-1)}}f_{k-1}\left(x^1,\dots,x^{k-1}\right).
	\]
	Then for any $y^* \in \mathbb{R}^d$, it follows from \eqref{G.O.Solutions_5_1} and (b') that
	\[
	\begin{aligned}
		f_k\left(y^1,\dots,y^{k-1},y^*\right)
		&\leq f_{k-1}\left(y^1,\dots,y^{k-1} \right)\\&=\min_{(x^1,\dots,x^{k-1})\in\mathbb{R}^{d(k-1)}}f_{k-1}\left(x^1,\dots,x^{k-1}\right)\\
		&=\min_{(x^1,\dots,x^{k})\in\mathbb{R}^{dk}}f_k\left(x^1,\dots,x^k\right).
	\end{aligned}
	\]
	Consequently, we obtain
	\[
	f_k\left(y^1,\dots,y^{k-1},y^*\right)
	=\min_{(x^1,\dots,x^{k})\in\mathbb{R}^{dk}}f_k\left(x^1,\dots,x^k\right).
	\]
	Since $y^*\in \mathbb{R}^d$ is arbitrary, the set $S_k$ is unbounded. This means that (a') holds and therefore completes the proof.
\end{proof}

Recently, the notions of natural clustering and attraction sets have been widely used in \cite{cuong2020qualitative,cuong2023global,CTWYOptim,Cuong2024} 
to study multi-source Weber and $k$-center problems. Motivated by this, we introduce the following definitions.

\begin{definition}
	\label{G.O.Solutions_7}
	Let $x=(x^1,\dots,x^k)\in \mathbb{R}^{dk}$. Set $A^0=\emptyset$, and for each $j\in J_k$, define
	\[
	A^j=\left\{\Omega^i \in \left(\{\Omega^i\}_{i \in I}\setminus\bigcup_{q=0}^{j-1}A^q\right)\,\Bigg|\,
	T_F(x^j,\Omega^i)=\min_{r\in J_k}T_F(x^r,\Omega^i)\right\}.
	\]
	The family $\{A^1,\dots,A^k\}$ is called the \textit{natural clustering} associated with $x$. 
\end{definition}

\begin{definition}
	\label{G.O.Solutions_8}
	Let $x=(x^1,\dots,x^k)\in \mathbb{R}^{dk}$. We say that the component $x^j$ of $x$, with $j\in J_k$, is \textit{attractive with respect to} $\{\Omega^i\}_{i\in I}$ if the set
	\[
	A_{x^j}
	=\left\{\Omega^i \in \{\Omega^i\}_{i \in I}\,\Big|\,
	T_F(x^j,\Omega^i)=\min_{r \in J_k}T_F(x^r,\Omega^i)\right\}
	\]
	is nonempty.
\end{definition}

It follows from the above definitions that
\begin{equation}
\label{A^j_A_x^j_relationship}
A^j= A_{x^j}\setminus\bigg(\bigcup_{q=0}^{j-1} A^q\bigg).
\end{equation}

The following theorem shows that a center located sufficiently far from the target sets can be removed without affecting optimality.

\begin{theorem}
	\label{G.O.Solutions_9} Let $\overline{x}=(\overline{x}^1,\dots,\overline{x}^k)\in S_k$ with $k\ge 2$. 
	Assume that there exist $i_0 \in I$,  $\omega^{i_0} \in \Omega^{i_0}$ and $j \in J_k$ such that
	\[
	\overline{x}^j \notin \Omega^{i_0}[r^{i_0}],
	\]
	where 
	\begin{equation}\label{ri0}
		r^{i_0}= (\|F\|\,\|F^\circ\|)^2\left(\sum_{p\in I}T_F(\omega^{i_0},\Omega^p)
		+\frac{\max\big\{T_F(\omega^{i_0},\omega)\mid \omega \in \bigcup_{p \in I}\Omega^p\big\}}{\|F\|\,\|F^\circ\|}\right)   
	\end{equation}
	and $\Omega^{i_0}[r^{i_0}]$ is defined in \eqref{enlargement}. 
	Then
	\[
	\overline{x}^{-j}= (\overline{x}^1,\dots,\overline{x}^{j-1},\overline{x}^{j+1},\dots,\overline{x}^{k}) \in S_{k-1},
	\]
	and the set $S_k$ is unbounded.
\end{theorem}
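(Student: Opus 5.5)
The plan is to show that the far-away center $\overline{x}^j$ is not attractive, i.e. it attains the minimum $\min_{r\in J_k}T_F(\overline{x}^r,\Omega^i)$ for no $i\in I$. From this, $f_k(\overline{x})=f_{k-1}(\overline{x}^{-j})$. The conclusion then follows from \eqref{G.O.Solutions_5_2} and Theorem~\ref{G.O.Solutions_5}.

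\textbf{Step 1 (upper bound on the optimal value).} Let $\alpha=f_k(\overline{x})=\min f_k$ and $s=\sum_{p\in I}T_F(\omega^{i_0},\Omega^p)$. Testing $f_k$ at the point $(\omega^{i_0},\dots,\omega^{i_0})$ gives $\alpha\le s$. Set $c=\|F\|\,\|F^\circ\|$. By \eqref{a} with $x\neq y$ we have $c\ge 1$, so $s\le c^2 s$.

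\textbf{Step 2 (transfer the distance from $\Omega^{i_0}$ to every $\Omega^i$).} Fix $i\in I$ and $\omega\in\Omega^i$. By Lemma~\ref{Preliminaries_2}(d) and \eqref{b},
\[
T_F(\overline{x}^j,\omega^{i_0})\le T_F(\overline{x}^j,\omega)+T_F(\omega,\omega^{i_0})\le T_F(\overline{x}^j,\omega)+c\,T_F(\omega^{i_0},\omega).
\]
Since $\omega^{i_0}\in\Omega^{i_0}$, we have $T_F(\overline{x}^j,\omega^{i_0})\ge T_F(\overline{x}^j,\Omega^{i_0})>r^{i_0}$. Let $\mu=\max\{T_F(\omega^{i_0},\omega)\mid\omega\in\bigcup_p\Omega^p\}$. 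Using \eqref{ri0}, i.e. $r^{i_0}=c^2s+c\mu$, we get
\[
T_F(\overline{x}^j,\omega)>c^2 s+c\mu-c\mu=c^2s\ge s\ge\alpha .
\]
Now $T_F(\overline{x}^j,\cdot)=\rho_F(\cdot-\overline{x}^j)$ is continuous and $\Omega^i$ is compact. By \eqref{T_rho_relationship_2} the infimum defining $T_F(\overline{x}^j,\Omega^i)$ is therefore attained, so the strict inequality survives: $T_F(\overline{x}^j,\Omega^i)>\alpha$. This strictness is the delicate point of the argument, and compactness is what secures it.

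\textbf{Step 3 (conclusion).} For every $i$ we have $\min_{r\in J_k}T_F(\overline{x}^r,\Omega^i)\le\alpha<T_F(\overline{x}^j,\Omega^i)$. Hence the minimum over $J_k$ equals the minimum over $J_k\setminus\{j\}$, and so $f_{k-1}(\overline{x}^{-j})=f_k(\overline{x})=\alpha$. Combined with \eqref{G.O.Solutions_5_2}, this yields $\min f_{k-1}\le\alpha\le\min f_{k-1}$. Thus $\overline{x}^{-j}\in S_{k-1}$ and condition (b) of Theorem~\ref{G.O.Solutions_5} fails. By that theorem $S_k$ is not compact, and since $S_k$ is closed by Theorem~\ref{G.O.Solution_2}, $S_k$ is unbounded.
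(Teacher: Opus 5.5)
Your proof is correct and follows essentially the same route as the paper: a strict lower bound $T_F(\overline{x}^j,\Omega^i)>\sum_{p\in I}T_F(\omega^{i_0},\Omega^p)\ge\alpha$ for every $i\in I$, so $\overline{x}^j$ is never a nearest center and $f_k(\overline{x})=f_{k-1}(\overline{x}^{-j})$, followed by \eqref{G.O.Solutions_5_2} and Theorem~\ref{G.O.Solutions_5}. The differences are in the details and both favor you: your triangle-inequality chain uses \eqref{b} once instead of twice, so it shows the smaller radius $\sum_{p\in I}T_F(\omega^{i_0},\Omega^p)+\|F\|\,\|F^\circ\|\,\mu$ would already suffice. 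Securing strictness by attainment on the compact set $\Omega^i$ is also cleaner than the paper's passage $\varepsilon\to 0$, which as written only yields a non-strict inequality unless one notes that the gap $T_F(\overline{x}^j,\Omega^{i_0})-r^{i_0}>0$ does not depend on $\varepsilon$.
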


\begin{proof} Observe first that for any permutation $\sigma$ of $J_k$, the vector 
	$\overline{x}^\sigma = (\overline{x}^{\sigma(1)},\dots,\overline{x}^{\sigma(k)})$ 
	is also an element of  $S_k$. Therefore, we may assume without loss of generality that 
	$j=k$ and $\overline{x}^k \not\in \Omega^{i_0}[r^{i_0}]$. Fix any $i \in I$. Then for every $\varepsilon > 0$, there exists $\omega_\varepsilon^i \in \Omega^i$ such that 
	\begin{equation}
		\label{G.O.Solutions_9_1}
		\begin{aligned}
			T_F(\overline{x}^k,\Omega^i)&\geq T_F(\overline{x}^k,\omega^i_{\varepsilon})-\varepsilon\quad(\text{by \eqref{T_rho_relationship_2} of Lemma~\ref{T_rho_relationship}})\\
&\geq\frac{T_F(\omega^i_{\varepsilon},\overline{x}^k)}{\|F\|\|F^\circ\|} -\varepsilon \quad (\text{by Lemma~\ref{T_lemma}(b)})\\
			&\geq\frac{T_F(\omega^{i_0},\overline{x}^k)-T_F(\omega^{i_0},\omega^i_{\varepsilon})}{\|F\|\|F^\circ\|}-\varepsilon \quad (\text{by Lemma~\ref{Preliminaries_2}(d)})\\
			&\geq\frac{T_F(\overline{x}^k,\omega^{i_0})}{(\|F\|\|F^\circ\|)^2}-\frac{T_F(\omega^{i_0},\omega^i_\varepsilon)}{\|F\|\|F^\circ\|}-\varepsilon \quad (\text{by Lemma~\ref{T_lemma}(b)})\\
			&\geq \frac{T_F(\overline{x}^k,\Omega^{i_0})}{(\|F\|\|F^\circ\|)^2} -\frac{T_F(\omega^{i_0},\omega^i_\varepsilon)}{\|F\|\|F^\circ\|}-\varepsilon \quad (\text{by~\eqref{T_rho_relationship_1}})\\
			&>\frac{r^{i_0}}{(\|F\|\|F^\circ\|)^2}-\frac{T_F(\omega^{i_0},\omega^i_\varepsilon)}{\|F\|\|F^\circ\|}-\varepsilon \quad (\text{since } \overline{x}^k \not\in \Omega^{i_0}[r^{i_0}])\\
			&=\sum_{p \in I}T_F(\omega^{i_0},\Omega^p)+\frac{\max\big\{T_F(\omega^{i_0},\omega)\mid \omega \in \bigcup_{p \in I}\Omega^p\big\}}{\|F\|\|F^\circ\|}-\frac{T_F(\omega^{i_0},\omega^i_\varepsilon)}{\|F\|\|F^\circ\|}-\varepsilon \quad (\text{by~\eqref{ri0}})\\
			&\geq \sum_{p \in I}T_F(\omega^{i_0},\Omega^p)-\varepsilon.
		\end{aligned}
	\end{equation}
	Since $i \in I$ is arbitrary, passing to the limit as $\varepsilon \to 0$, we obtain
	\begin{equation}
		\label{G.O.Solutions_9_2}
		T_F(\overline{x}^k,\Omega^i)> \sum_{p \in I}T_F(\omega^{i_0},\Omega^p) \text{ for all } i \in I.
	\end{equation}
	Next, we show that $A_{\overline{x}^k}=\emptyset$ (see Definition \ref{G.O.Solutions_8}). Indeed, suppose by contradiction that there is 
    \begin{equation}\label{def47}
            \Omega^* \in A_{\overline{x}^k}=\left\{\Omega^i \in \{\Omega^i\}_{i \in I}\mid
	T_F(\bar x^k,\Omega^i)=\min_{j \in J_k}T_F(\bar x^j,\Omega^i)\right\}.
    \end{equation}
   Define \(\omega^*=(\omega^{i_0},\dots,\omega^{i_0})\in\mathbb{R}^{dk}\). Then one has
	\[
	\begin{aligned}
		f_k(\overline{x})&\geq \min_{j \in J_k}T_F(\overline{x}^j,\Omega^*)\quad(\text{by \eqref{Generalized_Multi_Source_Weber_Problem}})\\
		&=T_F(\overline{x}^k,\Omega^*)\quad(\text{by \ref{def47}})\\
		&>\sum_{p \in I}T_F(\omega^{i_0},\Omega^p)\quad(\text{by~\eqref{G.O.Solutions_9_2}})\\
		&=f_k(\omega^*).
	\end{aligned}
	\]
	This contradicts the optimality of $\overline{x}$ for problem \eqref{Generalized_Multi_Source_Weber_Problem}.
	Thus, we must have $A_{\overline{x}^k} = \emptyset$. It follows that for every $i\in I$,
	\[
	\min_{j \in J_k}T_F(\overline{x}^j,\Omega^i)=\min_{j\in J_{k-1}}T_F(\overline{x}^j,\Omega^i),\]
	and hence
	\begin{equation}\label{key99}
		f_{k}(\overline{x}^1,\dots,\overline{x}^k)=f_{k-1}(\overline{x}^1,\dots,\overline{x}^{k-1}).
	\end{equation}
	Moreover, since
	
	\begin{equation}\label{key100}
		f_k(\overline{x})=\min_{(x^1,\dots,x^k)\in \mathbb{R}^{dk}} f_k\left(x^1,\dots,x^k \right)
		\leq
		\min_{(x^1,\dots,x^{k-1})\in \mathbb{R}^{d(k-1)}} f_{k-1}\left(x^1,\dots,x^{k-1} \right),
	\end{equation}
	we conclude that $\overline{x}^{-k}=(\overline{x}^1,\cdots,\overline{x}^{k-1})\in S_{k-1}.$ Combining this with \eqref{key99} and \eqref{key100}, we obtain
	\[
	\min_{x \in \mathbb{R}^{dk}} f_k(x)=\min_{x \in \mathbb{R}^{d(k-1)}} f_{k-1}(x).
	\]
	Therefore, condition (b) of Theorem \ref{G.O.Solutions_5} does not hold. According to this theorem, $S_k$ is not compact. Since $S_k$ is closed by Theorem~\ref{G.O.Solution_2}, it follows that $S_k$ is unbounded. This completes the proof.
\end{proof}

The following theorem provides a sufficient condition ensuring the compactness of the global optimal solution set $S_k$.

\begin{theorem}
	\label{G.O.Solutions_11}
	Consider problem \eqref{Generalized_Multi_Source_Weber_Problem} and
	assume that $k\geq 2$ and the sets $\Omega^1,\dots,\Omega^m$ are pairwise disjoint, i.e.,
	$$\Omega^i \cap \Omega^p = \emptyset\quad\text{for all }i,p \in I\text{ with }i \neq p.$$  
	Then, the global optimal solution set $S_k$ is compact.
	Moreover, for any $x=(x^1,\dots,x^k) \in S_k$, any $i\in I$ and any $\omega^i\in \Omega^i$, one has
	\begin{equation}\label{key999}
	x^j \in \Omega^{i}[r^{i}] \quad \text{for all } j \in J_k,    
	\end{equation}
		where $r^{i}$ and $\Omega^{i}[r^{i}]$ are defined as in \eqref{ri0} and \eqref{enlargement}, respectively. 
\end{theorem}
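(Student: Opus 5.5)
The plan is to get the localization \eqref{key999} from Theorem~\ref{G.O.Solutions_9} by contraposition, and then compactness follows from boundedness plus Theorem~\ref{G.O.Solution_2}.

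\textbf{Step 1: $S_k$ is bounded, via Theorem~\ref{G.O.Solutions_5}.} Suppose $S_k$ is unbounded. Then by Theorem~\ref{G.O.Solutions_5} the optimal values coincide:
\[
\min f_k=\min f_{k-1}.
\]
I want to contradict this using disjointness. Since $k\le m$, I take a minimizer $(y^1,\dots,y^{k-1})\in S_{k-1}$, which exists by Theorem~\ref{G.O.Solution_2}. The $m$ sets are assigned (via the natural clustering) to only $k-1$ centers. Two cases arise.
\begin{enumerate}
\item[(i)] Some $\Omega^{i_1}$ has $\min_{j\in J_{k-1}}T_F(y^j,\Omega^{i_1})>0$. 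Then I put $y^k\in\Omega^{i_1}$. By Lemma~\ref{Preliminaries_2}(a), the $i_1$ term drops to $0$, and no other term increases, so $f_k(y^1,\dots,y^k)<f_{k-1}(y^1,\dots,y^{k-1})$.
\item[(ii)] Every $\Omega^i$ is at distance zero from some $y^j$, i.e.\ every $\Omega^i$ contains some $y^j$ with $j\le k-1$. Since $m\ge k>k-1$, pigeonhole gives one point $y^j$ lying in two distinct sets. This contradicts pairwise disjointness.
\end{enumerate}
So in either case $\min f_k<\min f_{k-1}$, i.e.\ condition (b) of Theorem~\ref{G.O.Solutions_5} holds and $S_k$ is compact. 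This also removes the need for any contradiction argument: compactness follows directly.

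\textbf{Step 2: the localization \eqref{key999}.} I fix $x\in S_k$, $i\in I$ and $\omega^i\in\Omega^i$, and suppose some $x^j\notin\Omega^i[r^i]$. Theorem~\ref{G.O.Solutions_9} applies with $i_0=i$ and $\omega^{i_0}=\omega^i$; its hypothesis is exactly this exclusion, with $r^i$ as in \eqref{ri0}. It yields that $S_k$ is unbounded, contradicting Step~1. Hence $x^j\in\Omega^i[r^i]$ for all $j\in J_k$.

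\textbf{Main obstacle.} The delicate point is case (ii) of Step~1: it uses $k\le m$ from \eqref{m_k_condition} and the fact that zero minimal time means membership (Lemma~\ref{Preliminaries_2}(a)). Without disjointness the strict decrease can fail, as in Example~\ref{G.O.Solutions_4}. The rest is a direct invocation of earlier theorems.
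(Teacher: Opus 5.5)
Your proposal is correct and takes essentially the same route as the paper. The paper also fixes a point of $S_{k-1}$ and uses disjointness with $m\ge k$ (your pigeonhole case (ii)) to find a target set at positive minimal time. It puts the $k$-th center there to get $\min f_k<\min f_{k-1}$, deduces compactness from Theorem~\ref{G.O.Solutions_5}, and obtains \eqref{key999} by contradiction via Theorem~\ref{G.O.Solutions_9}. As you noted, the opening supposition that $S_k$ is unbounded is unnecessary, since the strict inequality is proved directly.
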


\begin{proof}
We first show that
\begin{equation}
\label{G.O.Solutions_11_1}
\min_{(x^1,\dots,x^k)\in\mathbb{R}^{dk}} f_k(x^1,\dots,x^k)
<
\min_{(x^1,\dots,x^{k-1})\in\mathbb{R}^{d(k-1)}} f_{k-1}(x^1,\dots,x^{k-1}).
\end{equation}

Let $\overline{x}^{-k}=(\overline{x}^1,\dots,\overline{x}^{k-1})\in S_{k-1}$ be arbitrary. We claim that there exists an index $i_*\in I$ such that
\begin{equation}
\label{G.O.Solutions_11_2}
\min_{j\in J_{k-1}}T_F(\overline{x}^j,\Omega^{i_*})>0.
\end{equation}
Indeed, suppose on the contrary that for every $i\in I$, one has
\[
\min_{j\in J_{k-1}}T_F(\overline{x}^j,\Omega^i)=0.
\]
Then, for each $i\in I$, there exists $j(i)\in J_{k-1}$ such that
\[
T_F(\overline{x}^{j(i)},\Omega^i)=0.
\]
By Lemma~\ref{Preliminaries_2}(a), this implies that
\[
\overline{x}^{j(i)}\in\Omega^i \quad \text{for all } i\in I.
\]
Since the sets $\Omega^1,\dots,\Omega^m$ are pairwise disjoint, each point $\overline{x}^j$ can belong to at most one of them. Hence the $k-1$ centers $\overline{x}^1,\dots,\overline{x}^{k-1}$ can belong to at most $k-1$ target sets. This is impossible because $m\ge k$ by \eqref{m_k_condition}. Therefore, \eqref{G.O.Solutions_11_2} must hold. Now, choose any $\omega^{i_*}\in\Omega^{i_*}$ and define
\[
\Hat{x}=(\overline{x}^1,\dots,\overline{x}^{k-1},\omega^{i_*})\in\mathbb{R}^{dk}.
\]
Then it follows from \eqref{G.O.Solutions_11_2} that
\begin{equation}\label{key33}
\min_{j\in J_k}T_F(\Hat{x}^j,\Omega^{i_*})=T_F(\omega^{i_*},\Omega^{i_*})=0
<
\min_{j\in J_{k-1}}T_F(\overline{x}^j,\Omega^{i_*}).    
\end{equation}
 Moreover, for every $i\in I\setminus\{i_*\}$, one has
 \begin{equation}\label{key34}
    \min_{j\in J_k}T_F(\Hat{x}^j,\Omega^i)
\le
\min_{j\in J_{k-1}}T_F(\overline{x}^j,\Omega^i).
\end{equation}
Therefore,
\[
\begin{aligned}
\min_{(x^1,\dots,x^k)\in\mathbb{R}^{dk}} f_k(x^1,\dots,x^{k})
&\le f_k(\overline{x}^1,\dots,\overline{x}^{k-1},\omega^{i_*}) \\
&= \sum_{i\in I}\min_{j\in J_k}T_F(\Hat{x}^j,\Omega^i) \\
&< \sum_{i\in I}\min_{j\in J_{k-1}}T_F(\overline{x}^j,\Omega^i)\quad(\text{by \eqref{key33} and \eqref{key34}}) \\
&= f_{k-1}(\overline{x}^{-k}) \\
&= \min_{(x^1,\dots,x^{k-1})\in\mathbb{R}^{d(k-1)}} f_{k-1}(x^1,\dots,x^{k-1}) \quad(\text{since $\overline{x}^{-k}\in S_{k-1}$}).
\end{aligned}
\]

This proves \eqref{G.O.Solutions_11_1}, which is exactly condition \rm{(b)} in Theorem~\ref{G.O.Solutions_5}. Hence, by Theorem~\ref{G.O.Solutions_5}, we conclude that $S_k$ is compact.

It remains to show that \eqref{key999} holds. Let $ x=( x^1,\dots, x^k)\in S_k$ be arbitrary.  Suppose on the contrary that there exist $i_0\in I$ and $j_*\in J_k$ such that
\[
 x^{j_*}\notin \Omega^{i_0}[r^{i_0}].
\]
Then, we deduce from Theorem~\ref{G.O.Solutions_9} that $S_k$ is unbounded. This contradicts the fact that $S_k$ is compact. Therefore, for any $i\in I$ one has
\[
x^j\in \Omega^{i}[r^{i}] \quad \text{for all } j\in J_k,
\]
which completes the proof.
\end{proof}

\subsection{Local optimal solutions}

In this subsection, we investigate the qualitative properties of local optimal solutions to problem ~\eqref{Generalized_Multi_Source_Weber_Problem}. The results presented here extend the existing findings in \cite{cuong2020qualitative,cuong2023global,CTWYOptim,Cuong2024,LNSYkcenter,LNTV} and related literature by generalizing the distance measure from the standard Euclidean norm and Minkowski gauge function to the minimal time function.

\begin{definition}\label{deflocal}
	A point $\overline{x}\in\mathbb{R}^{dk}$ is called a \textit{local optimal solution} of problem~\eqref{Generalized_Multi_Source_Weber_Problem} if there exists $\varepsilon>0$ such that
	\[
	f_k(\overline{x})\leq f_k(x)\quad \text{for all } x\in\mathbb{B}(\overline{x};\varepsilon).
	\]
	Moreover, if the above inequality is strict for all
	\(x\in\mathbb{B}(\overline{x};\varepsilon)\setminus\{\overline{x}\},\)
	then $\overline{x}$ is said to be a \textit{strict local optimal solution}. We denote by $S_k^{{\rm loc}}$ the set of all local optimal solutions of problem~\eqref{Generalized_Multi_Source_Weber_Problem}.
\end{definition}

For $\overline{x}=(\overline{x}^1,\dots,\overline{x}^k)\in \mathbb{R}^{dk}$, define
\begin{equation}
	\label{L.O.Solution_1}
	I(\overline{x}^j)=\big\{\,i\in I \,\big|\, \Omega^i \in A_{\overline{x}^j} \big\} \text{ and } 
	J_i(\overline{x})=\{j\in J_k \mid \Omega^i \in A_{\overline{x}^j}\}.
\end{equation}

The following lemma plays a crucial role in establishing the main results of this section.

\begin{lemma}
	\label{L.O.Solution_2}
	Let $\overline{x}=(\overline{x}^1,\dots,\overline{x}^k)\in \mathbb{R}^{dk}$ be given. Suppose that for every $i\in I$ the index set $J_i(\overline{x})$ defined in \eqref{L.O.Solution_1} is a singleton. Then there exists $\varepsilon>0$ such that for any 
	$x=(x^1,\dots,x^k)$ satisfying $\|x^j-\overline{x}^j\|<\varepsilon$ for all $j \in J_k$, \color{black}the following properties hold:
	\begin{enumerate}
		\item [\rm {(a)}] $J_i(x)=J_i(\overline{x})$ for all $i \in I$.
		\item [\rm {(b)}] $I(\overline{x}^j)=I(x^j)$ and $A_{x^j}=A_{\overline{x}^j}$ for all $j \in J_k$.
	\end{enumerate}
\end{lemma}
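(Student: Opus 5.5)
The plan is a strict-inequality/continuity argument based on Lemma~\ref{Preliminaries_2}(c). Since each $J_i(\overline{x})$ is a singleton, write $J_i(\overline{x})=\{j_i\}$ for $i\in I$. By definition of $A_{\overline{x}^j}$, the index $j_i$ is the unique minimizer of $j\mapsto T_F(\overline{x}^j,\Omega^i)$ over $J_k$. Hence
\[
\delta_i=\min_{j\in J_k\setminus\{j_i\}}T_F(\overline{x}^j,\Omega^i)-T_F(\overline{x}^{j_i},\Omega^i)>0
\]
(with the convention $\delta_i=+\infty$ if $k=1$). Set $\delta=\min_{i\in I}\delta_i>0$, which is positive because $I$ is finite. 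I will take $\varepsilon=\delta/(2\|F^\circ\|)$.

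For any $x$ with $\|x^j-\overline{x}^j\|<\varepsilon$ for all $j$, Lemma~\ref{Preliminaries_2}(c) gives
\[
|T_F(x^j,\Omega^i)-T_F(\overline{x}^j,\Omega^i)|\le\|F^\circ\|\,\|x^j-\overline{x}^j\|<\delta/2
\]
for all $i,j$. Therefore, for $j\neq j_i$,
\[
T_F(x^j,\Omega^i)>T_F(\overline{x}^j,\Omega^i)-\delta/2\ge T_F(\overline{x}^{j_i},\Omega^i)+\delta/2>T_F(x^{j_i},\Omega^i).
\]
So $j_i$ is again the unique minimizer over $J_k$ for $x$, i.e.\ $J_i(x)=\{j_i\}=J_i(\overline{x})$, which proves (a).

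For (b), I will note that $i\in I(x^j)$ iff $j\in J_i(x)$, by the definitions in \eqref{L.O.Solution_1}. Thus (a) gives $I(x^j)=\{i\mid j\in J_i(x)\}=\{i\mid j\in J_i(\overline{x})\}=I(\overline{x}^j)$. Moreover $A_{x^j}=\{\Omega^i\mid i\in I(x^j)\}$, so $A_{x^j}=A_{\overline{x}^j}$ follows. One caveat: if two target sets coincide as sets, indexing of $A$ by sets is slightly ambiguous, but the identification via indices $i$ handles it.

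The only real step is securing the uniform gap $\delta$ from the singleton hypothesis; everything else is the Lipschitz estimate, so I expect no significant obstacle.
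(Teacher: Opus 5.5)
Your proof is correct and takes essentially the same approach as the paper. You extract a positive gap $\delta_i$ from the singleton hypothesis, use the $\|F^\circ\|$-Lipschitz continuity of $T_F(\cdot,\Omega^i)$ to keep $j_i$ the strict minimizer, and deduce (b) from $i\in I(x^j)\iff j\in J_i(x)$. The paper takes $\varepsilon=\min_i \delta_i/(4\|F^\circ\|)$ instead of your $\delta/(2\|F^\circ\|)$, which makes no difference. One small point: with your convention $\delta_i=+\infty$ for $k=1$, your formula gives $\varepsilon=+\infty$, so in that trivial case just take any finite $\varepsilon>0$.
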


\begin{proof}
	(a) For each $i\in I$, let $j(i) \in J_k$ denote the unique element of $J_i(\overline{x})$. By the definition of $J_i(\overline{x})$, we have
	\begin{equation}\label{key101}
		T_F(\overline{x}^{j(i)},\Omega^i) < T_F(\overline{x}^j,\Omega^i) \quad \text{for all } j \in J_k\setminus\{j(i)\}.    
	\end{equation}
	Set
	\begin{equation*}
		\delta_i(\overline{x}) = \min_{j\in J_k \setminus \{j(i)\}} \bigl( T_F(\overline{x}^j,{\Omega^i}) - T_F(\overline{x}^{j(i)},\Omega^i) \bigr) > 0   
	\end{equation*}
	and
	\begin{equation}\label{key103}
		\varepsilon_i = \frac{\delta_i(\overline{x})}{4\|F^\circ\|}.
	\end{equation}
	Let $x=(x^1,\dots,x^k)\in \mathbb{R}^{dk}$ satisfy 
	\begin{equation}\label{key102}
		\|x^j-\overline{x}^j\|<\varepsilon_i\quad\text{for all }j \in J_k.
	\end{equation}
	By Lemma \ref{Preliminaries_2}(c), the function $T_F(\cdot,\Omega^i)$ is $\|F^\circ\|$-Lipschitz continuous on $\mathbb{R}^d$, and hence
	\begin{equation}
		\label{L.O.Solution_2_1}
		\begin{aligned}
			T_F(x^{j(i)},\Omega^i) &\le T_F(\overline{x}^{j(i)},\Omega^i) + \|F^\circ\|\|x^{j(i)}-\overline{x}^{j(i)}\|\\ &< T_F(\overline{x}^{j(i)},\Omega^i) + \frac{\delta_i(\overline{x})}{2}. \quad(\text{by \eqref{key103} and \eqref{key102}})
		\end{aligned}
	\end{equation}
	Furthermore, for every $j\in J_k\setminus\{j(i)\}$, we also obtain from the Lipschitz continuity, \eqref{key103} and \eqref{key102} that
	\begin{equation*}
		\label{L.O.Solution_2_2}
		\begin{aligned}
			T_F(x^j,\Omega^i) &\ge T_F(\overline{x}^j,{\Omega^i}) - \|F^\circ\|\|x^j-\overline{x}^j\|\\ &> T_F(\overline{x}^j,\Omega^i) - \frac{\delta_i(\overline{x})}{2}.
		\end{aligned}
	\end{equation*}
	Combining this with \eqref{key101} and \eqref{L.O.Solution_2_1}, we get
	\[
	\begin{aligned}
		T_F(x^{j(i)},\Omega^i)-T_F(x^j,\Omega^i)&<T_F(\overline{x}^{j(i)},\Omega^i)+\frac{\delta_i(\overline{x})}{2}-T_F(\overline{x}^j,\Omega^i) + \frac{\delta_i(\overline{x})}{2}\\
		&\leq 0.
	\end{aligned}
	\]
	This means that
	\begin{equation}\label{aboveinequality}
		T_F(x^{j(i)},\Omega^i) < T_F(x^j,\Omega^i) \quad \text{for all } j \in J_k\setminus \{j(i)\}.    
	\end{equation}
	Set $\varepsilon = \min_{i\in I} \varepsilon_i$. Then, for any $x=(x^1,\dots,x^k)$ satisfying $\|x^j-\overline{x}^j\|<\varepsilon$ for all $j \in J_k$, the inequality \eqref{aboveinequality} holds for every $i\in I$. Consequently, $$J_i(x) = \{j(i)\} = J_i(\overline{x}).$$
	
	(b) For every  $j \in J_k$, one has
	\[
	\begin{aligned}
		I(\overline{x}^j)
		&= \{\,i \in I \mid \Omega^i \in A_{\overline{x}^j}\,\}\\
		&= \{\,i \in I \mid j \in J_i(\overline{x})\,\}\\
		&= \{\,i \in I \mid j \in J_i(x)\, \} \quad(\text{by (a)})\\
		&= \{\,i \in I \mid \Omega^i \in A_{x^j}\,\}\\ 
		&= I(x^j).
	\end{aligned}
	\]
	Hence, $A_{\overline{x}^j} = A_{x^j}$.
\end{proof}

Let $\overline{x}=(\overline{x}^1,\dots,\overline{x}^k)\in \mathbb{R}^{dk}$ be such that 
$A_{\overline{x}^j}\neq \emptyset$ for some $j\in J_k$. 
Let $I(\overline{x}^j)$ be defined as in \eqref{L.O.Solution_1}. We associate with $\overline{x}^j$ the following generalized single-source Weber problem:

\begin{equation}\label{single_source_Weber_problem}
	\min \biggl\{\,f^j_{1}(x)=\sum_{i \in I(\overline{x}^j)} T_F(x,\Omega^i)\,\bigg|\, x\in \mathbb{R}^d \biggr\}.
\end{equation}

We conclude this subsection by establishing the necessary and sufficient conditions for the existence of local optimal
solutions to problem \eqref{Generalized_Multi_Source_Weber_Problem}.

\begin{theorem} \label{L.O.Solution_3}
	Let $\overline{x}=(\overline{x}^1,\dots,\overline{x}^k)\in \mathbb{R}^{dk}$, and suppose that $J_i(\overline{x})$ is a singleton for every $i \in I$. Consider the following assertions:
	
	\begin{enumerate}
		\item[\rm {(a)}] For each $j \in J_k$, if $A_{\overline{x}^j}$ is nonempty, then $\overline{x}^j$ is a (global) optimal solution of the generalized single-source Weber problem \eqref{single_source_Weber_problem} defined on the data set $A_{\overline{x}^j}$.
		\item[\rm{(b)}] $\overline{x}=(\overline{x}^1,\dots,\overline{x}^k)$ is a local optimal solution to problem \eqref{Generalized_Multi_Source_Weber_Problem}.
	\end{enumerate}
	
	Then (a) implies (b). The converse implication also holds if, in addition, each $\Omega^i$ is convex for all $i \in I$.
	
\end{theorem}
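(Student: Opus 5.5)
The natural tool is Lemma~\ref{L.O.Solution_2}. Since every $J_i(\overline{x})$ is a singleton, there is $\varepsilon>0$ such that for every $x$ with $\|x^j-\overline{x}^j\|<\varepsilon$ for all $j\in J_k$ we have $J_i(x)=J_i(\overline{x})$ and $I(x^j)=I(\overline{x}^j)$. In particular, the index sets $I(\overline{x}^j)$, $j\in J_k$, partition $I$. This gives the local decomposition
\begin{equation*}
f_k(x)=\sum_{j\in J_k}\ \sum_{i\in I(\overline{x}^j)} T_F(x^j,\Omega^i)=\sum_{j\in J_k,\ A_{\overline{x}^j}\neq\emptyset} f_1^j(x^j).
\end{equation*}
If $x$ lies in the Euclidean ball $\mathbb{B}(\overline{x};\varepsilon)$ of $\mathbb{R}^{dk}$, then every block satisfies $\|x^j-\overline{x}^j\|<\varepsilon$, so the identity holds on that ball. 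Evaluating at $x=\overline{x}$, the same identity holds for $f_k(\overline{x})$.

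For (a)$\Rightarrow$(b), take $x\in\mathbb{B}(\overline{x};\varepsilon)$. For each $j$ with $A_{\overline{x}^j}\neq\emptyset$, assumption (a) gives $f_1^j(\overline{x}^j)\le f_1^j(x^j)$. Summing over such $j$ and using the decomposition yields $f_k(\overline{x})\le f_k(x)$. Hence $\overline{x}\in S_k^{\rm loc}$.

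For (b)$\Rightarrow$(a) under convexity, fix $j$ with $A_{\overline{x}^j}\neq\emptyset$ and shrink $\varepsilon$ so that local optimality also holds on $\mathbb{B}(\overline{x};\varepsilon)$. Take $u\in\mathbb{R}^d$ with $\|u-\overline{x}^j\|<\varepsilon$, and let $x$ agree with $\overline{x}$ except in the $j$-th block, where $x^j=u$. Then $\|x-\overline{x}\|=\|u-\overline{x}^j\|<\varepsilon$. The decomposition gives
\begin{equation*}
f_k(x)-f_k(\overline{x})=f_1^j(u)-f_1^j(\overline{x}^j)\ge 0.
\end{equation*}
So $\overline{x}^j$ is a local minimizer of $f_1^j$. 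When each $\Omega^i$ is convex, Lemma~\ref{Preliminaries_2}(b) makes each $T_F(\cdot,\Omega^i)$ convex, so $f_1^j$ is a finite sum of convex functions and is convex. A local minimizer of a convex function is a global minimizer, which gives (a).

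The main point needing care is the local decomposition. It depends on the assignment of each $\Omega^i$ to its unique nearest center being stable under perturbation, and this is exactly what the singleton hypothesis together with Lemma~\ref{L.O.Solution_2} guarantees. Once the decomposition is in hand, both directions are routine; convexity is used only to pass from local to global in the converse.
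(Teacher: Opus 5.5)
Your proposal is correct and follows essentially the same route as the paper: Lemma~\ref{L.O.Solution_2} freezes the assignment $i\mapsto j(i)$ near $\overline{x}$, which gives the blockwise decomposition of $f_k$. Summation then yields (a)$\Rightarrow$(b), while perturbing a single block plus convexity of $T_F(\cdot,\Omega^i)$ gives (b)$\Rightarrow$(a). Your remark that moving only one block gives $\|x-\overline{x}\|=\|u-\overline{x}^j\|$ merely streamlines the paper's choice $\varepsilon=\min\{\varepsilon_1/\sqrt{k},\varepsilon_2\}$ and does not change the argument.
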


\begin{proof} We first prove that $(a) \Rightarrow (b)$. Let $\overline{x}=(\overline{x}^1,\dots,\overline{x}^k)\in \mathbb{R}^{dk}$ satisfy the hypotheses. For each $i \in I$, let $j(i)\in J_k$ denote the unique element of $J_i(\overline{x})$. By Lemma \ref{L.O.Solution_2}(a), there exists $\varepsilon>0$ such that for any $x=(x^1,\dots,x^k)\in \mathbb{R}^{dk}$ satisfying $\|x^j-\overline{x}^j\|<\varepsilon$ for all $j \in J_k$, we have $$J_i(x)=J_i(\overline{x})=\{j(i)\}\quad\text{for every }i \in I.$$ It follows that
	
	\begin{equation}
		\label{L.O.Solution_3_1}T_F(x^{j(i)},\Omega^i)=\min_{j \in J_k}T_F(x^j,\Omega^i).
	\end{equation}
	By assumption, for every $j \in J_k$ such that $A_{\overline{x}^j} \neq \emptyset$, we have
	
	\begin{equation}
		\label{L.O.Solution_3_2}
		\sum_{i\in I(\overline{x}^j)}T_F(\overline{x}^j,\Omega^i)\leq\sum_{i\in I(\overline{x}^j)}T_F(x^j,\Omega^i).
	\end{equation}
	We partition the index set $I$ into disjoint subsets $I = \bigcup_{j \in J_k} I(\overline{x}^j)$ and adopt the convention that$$\sum_{i \in I(\overline{x}^j)} T_F(x^j, \Omega^i) = 0 \quad \text{whenever } I(\overline{x}^j) = \emptyset.$$
	Then we have
	\[
	\begin{aligned}
		f_k(x) 
		&= \sum_{i \in I} \min_{j \in J_k} T_F(x^j,\Omega^i) \\
		&= \sum_{i \in I} T_F(x^{j(i)},\Omega^i) \quad (\text{by \eqref{L.O.Solution_3_1}})\\ 
		&= \sum_{j \in J_k} \sum_{i \in I(\overline{x}^j)} T_F(x^{j(i)},\Omega^i) \\
		&= \sum_{j \in J_k} \sum_{i \in I(\overline{x}^j)} T_F(x^j,\Omega^i) \\
		&\geq \sum_{j \in J_k} \sum_{i \in I(\overline{x}^j)} T_F(\overline{x}^{j},\Omega^i) \quad (\text{by \eqref{L.O.Solution_3_2}})\\
		&= \sum_{i \in I} T_F(\overline{x}^{j(i)},\Omega^i) \\
		&= \sum_{i \in I} \min_{j\in J_k}T_F(\overline{x}^j,\Omega^i) \quad (\text{by \eqref{L.O.Solution_3_1}})\\
		&= f_k(\overline{x}).
	\end{aligned}
	\]
	Thus, for every $x=(x^1,\dots,x^k) \in \mathbb{R}^{dk}$ satisfying $\|x^j-\overline{x}^j \|<\varepsilon$ for all $j \in J_k$, we have
	\begin{equation}
		\label{L.O.Solution_3_3}
		f_k(\overline{x}) \leq f_k(x).
	\end{equation}
	If $\|x-\overline{x}\|<\varepsilon$, then
	\[
	\|x^j-\overline{x}^j\|\le \|x-\overline{x}\|<\varepsilon
	\quad \text{for all } j \in J_k.
	\]
	Hence, by \eqref{L.O.Solution_3_3}, we obtain
	\[
	f_k(\overline{x})\le f_k(x)
	\quad \text{for all } x\in\mathbb{B}(\overline{x};\varepsilon).
	\]
	Therefore, $\overline{x}\in S_k^{{\rm loc}}$.

	Next, we prove that the converse implication $(b)\Rightarrow(a)$ also holds under the additional assumption that each $\Omega^i$ is convex for all $i\in I$. To this end, let $\overline{x}=(\overline{x}^1,\dots,\overline{x}^k)\in \mathbb{R}^{dk}$ be a local optimal solution of problem~\eqref{Generalized_Multi_Source_Weber_Problem}. This means that there exists $\varepsilon_1>0$ such that
	\[
	f_k(\overline{x})\leq f_k(x) \text{ for all } x \in \mathbb{B}(\overline{x},\varepsilon_1),
	\]
	which is equivalent to
	\begin{equation}
		\label{L.O.Solution_3_4}
		f_k(\overline{x})\leq f_k(x) \text{ for all } x\in \mathbb{R}^{dk} \text{ such that } \|x-\overline{x}\|<\varepsilon_1.
	\end{equation}
	Since $J_i(\overline{x})$ is singleton for any $i \in I$, from Lemma \ref{L.O.Solution_2}(b) it follows that there exists $\varepsilon_2>0$ such that for every $x = (x^1,\dots,x^k)$ satisfying
    \[
    \|x^j-\overline{x}^j\|<\varepsilon_2\quad \text{for all }j \in J_k,
    \]
    we have
	\begin{equation}
		\label{L.O.Solution_3_5}
		A_{\overline{x}^j}=A_{x^j} \text{ and } I(\overline{x}^j)=I(x^j).
	\end{equation}
	It is easy to verify that
	\[
	\|x-\overline{x}\|\leq \sqrt{k}\,\max_{j\in J_k}\|x^j-\overline{x}^j\|.
	\]
	Moreover, for any $K>0$, one has
	\[
	\|x^j-\overline{x}^j\|<K \quad \text{for all } j \in J_k
	\Longleftrightarrow
	\max_{j\in J_k}\|x^j-\overline{x}^j\|<K.
	\]
	Set
	\[
	\varepsilon=\min\left\{\frac{\varepsilon_1}{\sqrt{k}},\varepsilon_2\right\}.
	\]
	Then, for any $x=(x^1,\dots,x^k)\in \mathbb{R}^{dk}$ satisfying
	\[
	\|x^j-\overline{x}^j\|<\varepsilon \quad \text{for all } j \in J_k,
	\]
	the vector $x$ satisfies both \eqref{L.O.Solution_3_4} and \eqref{L.O.Solution_3_5}. Thus
	\begin{equation}
		\label{L.O.Solution_3_6}
		A_{\overline{x}^j}=A_{x^j},\quad I(\overline{x}^j)=I(x^j),\quad\text{and}\quad     f_k(\overline{x})\leq f_k(x).
	\end{equation}
	Assume that $A_{\overline{x}^{j_0}} \neq \emptyset$ for some $j_0 \in J_k$. Let $y \in \mathbb{R}^d$ be arbitrary such that
	\[
	\|y-\overline{x}^{j_0}\|<\varepsilon.
	\]
	Define $\Hat{x}=(\Hat{x}^1,\dots,\Hat{x}^k)\in \mathbb{R}^{dk}$ by
    \begin{equation}
    \label{Hat_x_def}
    	\Hat{x}^j=
	\begin{cases}
		y & \text{if } j=j_0,\\
		\overline{x}^j & \text{if } j \in J_k\setminus\{j_0\}.
	\end{cases}
    \end{equation}
    Since $\|\Hat{x}^j-\overline{x}^j\|<\varepsilon$ for all $j \in J_k$, we obtain
	\begin{equation}
		\label{L.O.Solution_3_7}
		\begin{aligned}
			f_k(\overline{x})&=\sum_{i \in I(\overline{x}^{j_0})}\left(\min_{j \in J_k}T_F(\overline{x}^j,\Omega^i) \right)+\sum_{i \not\in I(\overline{x}^{j_0})}\left(\min_{j \in J_k}T_F(\overline{x}^j,\Omega^i) \right)\\
			&=\sum_{i \in I(\overline{x}^{j_0})}T_F(\overline{x}^{j_0},\Omega^i) +\sum_{i \not\in I(\overline{x}^{j_0})}\left(\min_{j \in J_k\setminus \{j_0\}}T_F(\overline{x}^j,\Omega^i) \right),
		\end{aligned}
	\end{equation}
	and
	\begin{equation}
		\label{L.O.Solution_3_8}
		\begin{aligned}
			f_k(\Hat{x})    &=\sum_{i \in I(\Hat{x}^{j_0})}\left(\min_{j \in J_k}T_F(\Hat{x}^j,\Omega^i) \right)+\sum_{i \not\in I(\Hat{x}^{j_0})}\left(\min_{j \in J_k}T_F(\Hat{x}^j,\Omega^i) \right)\\
            &=\sum_{i \in I(\Hat{x}^{j_0})}T_F(\Hat{x}^{j_0},\Omega^i) +\sum_{i \not\in I(\Hat{x}^{j_0})}\left(\min_{j \in J_k\setminus\{j_0\}}T_F(\Hat{x}^j,\Omega^i) \right)\\
			&=\sum_{i \in I(\Hat{x}^{j_0})}T_F(y,\Omega^i)+\sum_{i \not\in I(\Hat{x}^{j_0})}\left(\min_{j \in J_k\setminus\{j_0\}}T_F(\overline{x}^j,\Omega^i) \right) \quad (\text{by \eqref{Hat_x_def}})\\
			&=\sum_{i \in I(\overline{x}^{j_0})}T_F(y,{\Omega^i})+\sum_{i \not\in I(\overline{x}^{j_0})}\left(\min_{j \in J_k \setminus \{j_0\}}T_F(\overline{x}^j,{\Omega^i}) \right) \quad (\text{by \eqref{L.O.Solution_3_6}}).
		\end{aligned}
	\end{equation}
    It also follows from \eqref{L.O.Solution_3_6} that
	\[
	f_k(\overline{x}) \le f_k(\Hat{x}).
	\]
	Combining this with \eqref{L.O.Solution_3_7} and \eqref{L.O.Solution_3_8}, we get
	\[
	\sum_{i \in I(\overline{x}^{j_0})} T_F(\overline{x}^{j_0},\Omega^i) \le \sum_{i \in I(\overline{x}^{j_0})} T_F(y,\Omega^i) 
	\quad \text{for all } y \in \mathbb{R}^d \text{ such that } \|y-\overline{x}^{j_0}\|<\varepsilon.
	\]
	Hence, $\overline{x}^{j_0}$ is a local optimal solution of the single-source Weber problem associated with the data set $A_{\overline{x}^{j_0}}$. Under the additional assumption that each $\Omega^i$ is convex, it follows from Lemma~\ref{Preliminaries_2}(b) that the function $T_F(\cdot,\Omega^i)$ is convex for every $i\in I$. Therefore, the objective function of problem~\eqref{single_source_Weber_problem} is convex, and hence every local optimal solution is also a global one. Consequently, $\overline{x}^{j_0}$ is a global optimal solution of problem~\eqref{single_source_Weber_problem}. This completes the proof.
\end{proof}

\section{Lipschitz properties of the objective and optimal value functions}\label{lip}

This section is devoted to investigating the Lipschitz continuity of the objective function and the optimal value function associated with problem \eqref{Generalized_Multi_Source_Weber_Problem}. To facilitate our analysis, we first introduce the necessary notation, including relevant sets and metrics.

Let 
\[
\mathcal{K}=\{A \subset \mathbb{R}^d \mid A \text{ is nonempty and compact}\}
\]
be the family of all nonempty compact subsets of $\mathbb{R}^d$. For any $A,B \in \mathcal{K}$, define
\begin{equation}\label{dk}
	d_{\mathcal{K}}(A,B)=\max\left\{ \sup_{a\in A}\inf_{b\in B}\|a-b\|,\ \sup_{b\in B}\inf_{a\in A}\|a-b\| \right\}.    
\end{equation}

Furthermore, for any elements $\Omega = (\Omega^1, \dots, \Omega^m)$ and $\Hat\Omega = (\Hat\Omega^1, \dots, \Hat\Omega^m)$ in the product space $\mathcal{K}^m$, we define the metric $d_{\mathcal{K}^m}$ by
\begin{equation}\label{dkm}
	d_{\mathcal{K}^m}(\Omega, \Hat\Omega) = \max_{1 \leq i \leq m} d_{\mathcal{K}}(\Omega^i, \Hat\Omega^i).  
\end{equation}

We then consider the product space
\[
\mathcal{X}= \mathbb{R}^{dk}\times \mathcal{K}^m,
\]
equipped with the metric
\begin{equation}\label{dX}
	d_\mathcal{X}((x,\Omega),(\Hat x,\Hat \Omega))=\| x-\Hat x\|+d_{\mathcal{K}^m}(\Omega,\Hat \Omega),
\end{equation}
where $x=(x^1,\dots,x^k)$, $\Hat x=(\Hat x^1,\dots,\Hat x^k)$, $\Omega=(\Omega^1,\dots,\Omega^m)$, and $\Hat \Omega=(\Hat \Omega^1,\dots,\Hat \Omega^m)$.

It is straightforward to verify that $(\mathcal{K}, d_{\mathcal{K}})$, $(\mathcal{K}^m, d_{\mathcal{K}^m})$, and $(\mathcal{X}, d_{\mathcal{X}})$ are well-defined metric spaces.

\begin{remark}From this section onward, we use the notation $f_k(x, \Omega)$ for $(x, \Omega) \in \mathcal{X}$ to represent the objective function of problem \eqref{Generalized_Multi_Source_Weber_Problem}. This formulation explicitly accounts for the dependence of the objective value on both the centers and the target sets, which is more convenient for the subsequent sensitivity analysis.\end{remark}

The following results establishes the Lipschitz continuity of the component functions with respect to both the variables and the target sets.

\begin{proposition}
	\label{O.V.F_Lipschitz_1}
	For any $(i,j)\in I\times J$, define $\varphi_{i,j}:\mathcal{X}\to \mathbb{R}$ by
	\[
	\varphi_{i,j}(x,\Omega)=T_F(x^j,\Omega^i)\quad\text{for }(x,\Omega)=((x^1,\dots,x^m),(\Omega^1,\dots\Omega^m))\in\mathcal{X}.
	\]
	Then $\varphi_{i,j}$ is $\|F^\circ\|$-Lipschitz continuous on $\mathcal{X}$, that is,
	\[
	\big|\varphi_{i,j}(x,\Omega)-\varphi_{i,j}(\Hat x,\Hat \Omega)\big|
	\leq \|F^\circ\|\, d_\mathcal{X}\big((x,\Omega),(\Hat x,\Hat \Omega)\big),
	\]
	for all $(x,\Omega),(\Hat{x},\Hat{\Omega}) \in \mathcal{X}$.
\end{proposition}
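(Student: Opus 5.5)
The plan is to split the difference with the triangle inequality,
\[
|T_F(x^j,\Omega^i)-T_F(\Hat x^j,\Hat\Omega^i)|\le |T_F(x^j,\Omega^i)-T_F(\Hat x^j,\Omega^i)|+|T_F(\Hat x^j,\Omega^i)-T_F(\Hat x^j,\Hat\Omega^i)|,
\]
and to estimate the two terms separately.

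\textbf{First term.} By Lemma~\ref{Preliminaries_2}(c), $T_F(\cdot,\Omega^i)$ is $\|F^\circ\|$-Lipschitz on $\mathbb{R}^d$. Hence the first term is at most $\|F^\circ\|\,\|x^j-\Hat x^j\|$. Since the norm on the product space satisfies $\|x^j-\Hat x^j\|\le\|x-\Hat x\|$, this is at most $\|F^\circ\|\,\|x-\Hat x\|$.

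\textbf{Second term.} Here the point $z=\Hat x^j$ is fixed and only the target set moves. I will show
\[
|T_F(z,A)-T_F(z,B)|\le \|F^\circ\|\, d_{\mathcal K}(A,B)
\quad\text{for } A,B\in\mathcal K .
\]
\begin{itemize}
\item By \eqref{T_rho_relationship_2} and compactness, $T_F(z,A)=\inf_{a\in A}T_F(z,a)$.
\item Fix $a\in A$. Compactness of $B$ gives $b\in B$ with $\|a-b\|=\inf_{b'\in B}\|a-b'\|\le d_{\mathcal K}(A,B)$.
\item The map $w\mapsto T_F(z,w)=\rho_F(w-z)$ is $\|F^\circ\|$-Lipschitz in $w$. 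This follows from sublinearity of $\rho_F$ and the bound $\rho_F(u)\le\|F^\circ\|\,\|u\|$ from Lemma~\ref{T_lemma}: indeed $\rho_F(w-z)\le\rho_F(w'-z)+\rho_F(w-w')$.
\item Therefore $T_F(z,B)\le T_F(z,b)\le T_F(z,a)+\|F^\circ\|\,d_{\mathcal K}(A,B)$.
\item Taking the infimum over $a\in A$ gives $T_F(z,B)\le T_F(z,A)+\|F^\circ\|\,d_{\mathcal K}(A,B)$. Swapping the roles of $A$ and $B$ gives the absolute value bound.
\end{itemize}

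\textbf{Conclusion.} Apply the second estimate with $A=\Omega^i$ and $B=\Hat\Omega^i$, and use $d_{\mathcal K}(\Omega^i,\Hat\Omega^i)\le d_{\mathcal K^m}(\Omega,\Hat\Omega)$ from \eqref{dkm}. Adding the two bounds gives $\|F^\circ\|\big(\|x-\Hat x\|+d_{\mathcal K^m}(\Omega,\Hat\Omega)\big)=\|F^\circ\|\,d_{\mathcal X}$ by \eqref{dX}.

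\textbf{Main obstacle.} The only step that needs care is the Lipschitz dependence on the second argument, because the minimal time function is asymmetric. The set-perturbation step must use Lipschitz continuity of $\rho_F(\cdot - z)$ in its argument $w$. It must not use \eqref{b}, which would introduce an extra factor $\|F\|\|F^\circ\|$. Sublinearity of $\rho_F$ (convexity of $F$ with $0\in\operatorname{int}F$) together with the upper bound from Lemma~\ref{T_lemma} handles this.
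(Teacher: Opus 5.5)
Your proof is correct and is essentially the paper's argument. The paper also combines the triangle inequality for $T_F$ (Lemma~\ref{Preliminaries_2}(d), i.e.\ sublinearity of $\rho_F$) with the bound $T_F(x,y)\le\|F^\circ\|\,\|x-y\|$ from \eqref{a}. The only difference is packaging: the paper handles the point and set perturbations in one chain using an $\varepsilon$-minimizer $\Hat\omega^i_\varepsilon\in\Hat\Omega^i$, whereas you split through the intermediate value $T_F(\Hat x^j,\Omega^i)$ and invoke Lemma~\ref{Preliminaries_2}(c) for the point part.
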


\begin{proof}
	Fix $(i,j)\in I\times J$ and let $(x,\Omega),(\Hat x,\Hat \Omega)\in \mathcal{X}$. 
	For any $\varepsilon>0$, there exists $\Hat{\omega}_{\varepsilon}^i\in \Hat{\Omega}_i$ such that
	\[
	T_F(\Hat x^j,\Hat \Omega^i)\ge T_F(\Hat x^j,\Hat \omega^i_{\varepsilon})-\varepsilon.
	\]
	Let $\omega^i\in \Omega^i$ be arbitrary. Then
	
	$$\begin{aligned} \varphi_{i,j}(x,\Omega)-\varphi_{i,j}(\Hat x,\Hat \Omega) &= T_F(x^j,\Omega^i)-T_F(\Hat x^j,\Hat \Omega^i)\\ &\leq T_F(x^j,\omega^i)-(T_F(\Hat x^j,\Hat \omega^i_{\varepsilon})-\varepsilon) \quad(\text{by \eqref{T_rho_relationship_2}})\\ 
		&= \big[T_F(x^j,\omega^i)-T_F(\Hat x^j,\omega^i)\big] + \big[T_F(\Hat x^j,\omega^i)-T_F(\Hat x^j,\Hat \omega^i_{\varepsilon})\big] + \varepsilon\\
		&\leq T_F(x^j,\Hat{x}^j) + T_F(\Hat{\omega}^i_\varepsilon,\omega^i) + \varepsilon \quad(\text{by Lemma \ref{Preliminaries_2}(d)})\\
		&\leq \|F^\circ\|\,\|x^j-\Hat x^j\| + \|F^\circ\|\,\|\omega^i-\Hat \omega^i_{\varepsilon}\| + \varepsilon \quad(\text{by \eqref{a}}). \end{aligned}$$
	
	By taking the infimum over $\omega^i \in \Omega^i$ and the supremum over $\hat{\omega} \in \hat{\Omega}_i$, and then invoking \eqref{dk} and \eqref{dX}, we obtain
	\[
	\begin{aligned}
		\varphi_{i,j}(x,\Omega)-\varphi_{i,j}(\Hat x,\Hat \Omega)
		&\leq \|F^\circ\|\Big(\inf_{\omega^i\in \Omega^i}\|\omega^i-\Hat \omega^i_{\varepsilon}\|
		+\|x^j-\Hat x^j\|\Big) + \varepsilon\\
		&\leq \|F^\circ\|\Big(
		\sup_{\Hat \omega \in \Hat \Omega^i}\inf_{\omega \in \Omega^i}\|\omega-\Hat \omega\|
		+\|x-\Hat x\|
		\Big) + \varepsilon\\
		&\leq \|F^\circ\|\Big(
		d_{\mathcal{K}}(\Omega^i,\Hat \Omega^i)
		+\|x-\Hat x\|
		\Big) + \varepsilon\\
		&\leq \|F^\circ\|\, d_{\mathcal{X}}\big((x,\Omega),(\Hat x,\Hat \Omega)\big) + \varepsilon.
	\end{aligned}
	\]
	Since $\varepsilon>0$ is arbitrary, it follows that
	\[
	\varphi_{i,j}(x,\Omega)-\varphi_{i,j}(\Hat x,\Hat \Omega)
	\leq \|F^\circ\|\, d_{\mathcal{X}}\big((x,\Omega),(\Hat x,\Hat \Omega)\big).
	\]
	Changing the roles of $(\Hat x,\Hat \Omega)$ and $(x,\Omega)$, we also have
	\[
	\varphi_{i,j}(\Hat x,\Hat \Omega)-\varphi_{i,j}(x,\Omega) \leq \|F^\circ\|d_{\mathcal{X}}((\Hat x,\Hat \Omega),(x,\Omega)).
	\]
	Hence,
	\begin{equation*}
		\label{O.V.F_Lipschitz_1_0}
		|\varphi_{i,j}(x,\Omega)-\varphi_{i,j}(\Hat x,\Hat \Omega)| \leq \|F^\circ\|d_{\mathcal{X}}((x,\Omega),(\Hat x,\Hat \Omega)),
	\end{equation*}
	for all $(x,\Omega) \text{ and } (\Hat x,\Hat \Omega)$ in  $\mathcal{X}$.
\end{proof}

\begin{proposition}
	\label{O.V.F_Lipschitz_2}
	For each $i \in I$, the function $\psi_i(\cdot,\Omega):\mathcal{X}\to\mathbb{R}$ defined by
	\[
	\psi_i(x,\Omega)=\min_{j \in J_k}\varphi_{i,j}(x,\Omega)
	\]
	is $\|F^\circ\|$-Lipschitz continuous on $\mathcal{X}$.
\end{proposition}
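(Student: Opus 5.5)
The plan is to reduce the claim to Proposition~\ref{O.V.F_Lipschitz_1} together with the elementary fact that the pointwise minimum of finitely many real numbers is $1$-Lipschitz with respect to the sup-norm. Concretely, for real numbers $a_1,\dots,a_k$ and $b_1,\dots,b_k$, one has
\[
\Big|\min_{j\in J_k} a_j-\min_{j\in J_k} b_j\Big|\le \max_{j\in J_k}|a_j-b_j|.
\]
I would verify this directly. Pick $j_1$ with $b_{j_1}=\min_j b_j$. Then
\[
\min_j a_j-\min_j b_j\le a_{j_1}-b_{j_1}\le \max_j|a_j-b_j|,
\]
and the reverse inequality follows by exchanging the roles of $a$ and $b$.

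Next, fix $i\in I$ and two points $(x,\Omega),(\Hat x,\Hat\Omega)\in\mathcal{X}$. I would apply the inequality above with $a_j=\varphi_{i,j}(x,\Omega)$ and $b_j=\varphi_{i,j}(\Hat x,\Hat\Omega)$. This gives
\[
|\psi_i(x,\Omega)-\psi_i(\Hat x,\Hat\Omega)|\le \max_{j\in J_k}|\varphi_{i,j}(x,\Omega)-\varphi_{i,j}(\Hat x,\Hat\Omega)|.
\]
By Proposition~\ref{O.V.F_Lipschitz_1}, each term in the maximum is at most $\|F^\circ\|\,d_{\mathcal{X}}((x,\Omega),(\Hat x,\Hat\Omega))$. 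Since this bound does not depend on $j$, it also bounds the maximum, which yields the claim.

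There is no real obstacle here; the only care needed is in the min-difference inequality. I would choose the minimizing index for the subtracted term, not for the leading one, so that the sign comes out correctly. I would also note that the minimum over the finite index set $J_k$ is attained, so no $\varepsilon$-approximation is required.
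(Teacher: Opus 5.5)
Your proposal is correct and is essentially the paper's own argument: the paper chooses $p\in J_k$ with $\psi_i(\Hat x,\Hat\Omega)=\varphi_{i,p}(\Hat x,\Hat\Omega)$, bounds $\psi_i(x,\Omega)-\psi_i(\Hat x,\Hat\Omega)\le\varphi_{i,p}(x,\Omega)-\varphi_{i,p}(\Hat x,\Hat\Omega)$ via Proposition~\ref{O.V.F_Lipschitz_1}, and then interchanges the two points. The only difference is that you state this step separately as the inequality $|\min_j a_j-\min_j b_j|\le\max_j|a_j-b_j|$, which changes the presentation but not the proof.
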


\begin{proof}
	Let $(x,\Omega),(\Hat{x},\Hat{\Omega}) \in \mathcal{X}$. 
	Choose $p \in J_k$ such that
	\[
	\psi_i(\Hat{x},\Hat{\Omega})
	=\varphi_{i,p}(\Hat{x},\Hat{\Omega}).
	\]
	Then we have
	\[
	\begin{aligned}
		\psi_i(x,\Omega)-\psi_i(\Hat{x},\Hat{\Omega})
		&\leq \varphi_{i,p}(x,\Omega)-\varphi_{i,p}(\Hat{x},\Hat{\Omega})\\
		&\leq \|F^\circ\|\, d_{\mathcal{X}}\big((x,\Omega),(\Hat{x},\Hat{\Omega})\big),
	\end{aligned}
	\]
	where the last inequality follows from Proposition~\ref{O.V.F_Lipschitz_1}.
	Interchanging the roles of $(x,\Omega)$ and $(\Hat{x},\Hat{\Omega})$, we also obtain
	\[
	\psi_i(\Hat{x},\Hat{\Omega})-\psi_i(x,\Omega)
	\leq \|F^\circ\|\, d_{\mathcal{X}}\big((x,\Omega),(\Hat{x},\Hat{\Omega})\big).
	\]
	Therefore,
	\[
	|\psi_i(x,\Omega)-\psi_i(\Hat{x},\Hat{\Omega})|
	\leq \|F^\circ\|\, d_{\mathcal{X}}\big((x,\Omega),(\Hat{x},\Hat{\Omega})\big),
	\]
	for all $(x,\Omega),(\Hat{x},\Hat{\Omega}) \in \mathcal{X}$.
\end{proof}

We now establish the Lipschitz continuity of the objective function $f_k$.

\begin{theorem}
	\label{O.V.F_Lipschitz_3}
	The objective function $f_k$ is $m\|F^\circ\|$-Lipschitz continuous on $\mathcal{X}$.
\end{theorem}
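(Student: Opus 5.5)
The plan is to write $f_k$ as a sum of the functions from Proposition~\ref{O.V.F_Lipschitz_2} and then apply the triangle inequality. By the definition of the objective in \eqref{Generalized_Multi_Source_Weber_Problem}, viewed now as a function on $\mathcal{X}$, we have
\[
f_k(x,\Omega)=\sum_{i\in I}\min_{j\in J_k}T_F(x^j,\Omega^i)=\sum_{i\in I}\psi_i(x,\Omega)\quad\text{for all }(x,\Omega)\in\mathcal{X},
\]
where $\psi_i$ is the function introduced in Proposition~\ref{O.V.F_Lipschitz_2}.

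Next, fix $(x,\Omega),(\Hat x,\Hat\Omega)\in\mathcal{X}$ and estimate
\[
\big|f_k(x,\Omega)-f_k(\Hat x,\Hat\Omega)\big|\le\sum_{i\in I}\big|\psi_i(x,\Omega)-\psi_i(\Hat x,\Hat\Omega)\big|\le\sum_{i\in I}\|F^\circ\|\,d_{\mathcal{X}}\big((x,\Omega),(\Hat x,\Hat\Omega)\big).
\]
The second inequality is exactly Proposition~\ref{O.V.F_Lipschitz_2}, applied once for each $i\in I$. Since $|I|=m$, the right-hand side equals $m\|F^\circ\|\,d_{\mathcal{X}}\big((x,\Omega),(\Hat x,\Hat\Omega)\big)$, which is the claimed Lipschitz bound.

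There is no genuine obstacle, since the substantive work was already done in Propositions~\ref{O.V.F_Lipschitz_1} and~\ref{O.V.F_Lipschitz_2}. The only point to state explicitly is that the Lipschitz constant of a finite sum is at most the sum of the individual constants, each of which is $\|F^\circ\|$ here. This is what produces the factor $m$.
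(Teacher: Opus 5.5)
Your proof is correct and is essentially the paper's proof. Like the paper, you write $f_k=\sum_{i\in I}\psi_i$, apply the triangle inequality, and invoke Proposition~\ref{O.V.F_Lipschitz_2} once for each of the $m$ indices, which gives the constant $m\|F^\circ\|$.
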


\begin{proof}
	Recall that
	\[
	f_k(x,\Omega)=\sum_{i\in I}\psi_i(x,\Omega).
	\]
	Let $(x,\Omega),(\Hat{x},\Hat{\Omega})\in \mathcal{X}$. By Proposition \ref{O.V.F_Lipschitz_2}, for each $i\in I$, we have
	\[
	\left|\psi_i(x,\Omega)-\psi_i(\Hat{x},\Hat{\Omega})\right|
	\le \|F^\circ\|\, d_{\mathcal X}\big((x,\Omega),(\Hat{x},\Hat{\Omega})\big).
	\]
	Therefore,
	\[
	\begin{aligned}
		\left|f_k(x,\Omega)-f_k(\Hat{x},\Hat{\Omega})\right|
		&= \left|\sum_{i\in I}\psi_i(x,\Omega)-\sum_{i\in I}\psi_i(\Hat{x},\Hat{\Omega})\right| \\
		&\le \sum_{i\in I}\left|\psi_i(x,\Omega)-\psi_i(\Hat{x},\Hat{\Omega})\right| \\
		&\le \sum_{i\in I}\|F^\circ\|\, d_{\mathcal X}\big((x,\Omega),(\Hat{x},\Hat{\Omega})\big) \\
		&= m\|F^\circ\|\, d_{\mathcal X}\big((x,\Omega),(\Hat{x},\Hat{\Omega})\big).
	\end{aligned}
	\]
	Hence, $f_k$ is $m\|F^\circ\|$-Lipschitz continuous on $\mathcal{X}$.
\end{proof}

To complete the stability analysis of the generalized multi-source Weber problem, we now establish the Lipschitz continuity of its optimal value function with respect to the parameter~$\Omega$.

\begin{theorem}
	\label{O.V.F_Lipschitz_4}
	The optimal value function $v_k:\mathcal{K}^m\to \mathbb{R}$ associated with problem~\eqref{Generalized_Multi_Source_Weber_Problem}, defined by
	\begin{equation}
		\label{v_k_definition}
		v_k(\Omega)=\min\{f_k(x,\Omega)\mid x\in \mathbb{R}^{dk}\},
	\end{equation}
	is $m\|F^\circ\|$-Lipschitz continuous on $\mathcal{K}^m$.
\end{theorem}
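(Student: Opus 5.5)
We will use the standard argument that an infimum of a family of uniformly Lipschitz functions is Lipschitz with the same constant. First, $v_k$ is well defined and finite on all of $\mathcal{K}^m$. For each fixed $\Omega\in\mathcal{K}^m$, every $\Omega^i$ is nonempty and compact, so Theorem~\ref{G.O.Solution_2} applies with these target sets. It shows that the minimum in \eqref{v_k_definition} is attained, and $v_k(\Omega)\ge 0$ since $T_F\ge 0$.

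Now fix $\Omega,\Hat\Omega\in\mathcal{K}^m$. By Theorem~\ref{G.O.Solution_2} applied to $\Hat\Omega$, we may choose $\Hat x\in\mathbb{R}^{dk}$ with $f_k(\Hat x,\Hat\Omega)=v_k(\Hat\Omega)$. We evaluate the objective for $\Omega$ at this same point $\Hat x$. By the definition of $v_k$ and Theorem~\ref{O.V.F_Lipschitz_3} applied to the pairs $(\Hat x,\Omega)$ and $(\Hat x,\Hat\Omega)$,
\[
v_k(\Omega)-v_k(\Hat\Omega)\le f_k(\Hat x,\Omega)-f_k(\Hat x,\Hat\Omega)\le m\|F^\circ\|\,d_{\mathcal X}\big((\Hat x,\Omega),(\Hat x,\Hat\Omega)\big).
\]
Since the $x$-components coincide, \eqref{dX} gives $d_{\mathcal X}((\Hat x,\Omega),(\Hat x,\Hat\Omega))=d_{\mathcal{K}^m}(\Omega,\Hat\Omega)$. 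Hence
\[
v_k(\Omega)-v_k(\Hat\Omega)\le m\|F^\circ\|\,d_{\mathcal{K}^m}(\Omega,\Hat\Omega).
\]

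Exchanging the roles of $\Omega$ and $\Hat\Omega$, now using a minimizer $x$ of $f_k(\cdot,\Omega)$, gives the reverse inequality. Together the two inequalities yield $|v_k(\Omega)-v_k(\Hat\Omega)|\le m\|F^\circ\|\,d_{\mathcal{K}^m}(\Omega,\Hat\Omega)$, which is the claim.

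There is no real obstacle in this argument. The only point needing care is that the Lipschitz estimate of Theorem~\ref{O.V.F_Lipschitz_3} holds for arbitrary elements of $\mathcal{X}$, so that it may be applied with a common $x$-component. The attainment of the minimum is a convenience rather than a necessity: without it, one would take $\varepsilon$-minimizers and let $\varepsilon\to0$.
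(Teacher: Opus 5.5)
Your proof is correct and follows essentially the same route as the paper. In both, the two objectives are compared at a common point using a bound that is uniform in $x$, and the argument is then symmetrized. You cite Theorem~\ref{O.V.F_Lipschitz_3} directly, whereas the paper re-derives the fixed-$x$ estimate from Proposition~\ref{O.V.F_Lipschitz_1} by taking minima over $j$ and summing over $i$. You also use an exact minimizer from Theorem~\ref{G.O.Solution_2}, while the paper uses $\varepsilon$-minimizers and lets $\varepsilon\to 0$; neither difference is substantive.
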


\begin{proof}
	Let $\Omega,\Hat{\Omega}\in \mathcal{K}^m$ be arbitrary. For any $i\in I$ and $j\in J_k$, it follows from Proposition \ref{O.V.F_Lipschitz_1_0} that
	\[
	\varphi_{i,j}(x,\Omega)
	\leq \varphi_{i,j}(x,\Hat{\Omega})
	+\|F^\circ\|\, d_{\mathcal X}\big((x,\Omega),(x,\Hat{\Omega})\big)
	\]
	for every $x\in \mathbb{R}^{dk}$. By \eqref{dkm} and \eqref{dX}, we have
	\[
	d_{\mathcal K^m}(\Omega,\Hat{\Omega})=d_{\mathcal X}\big((x,\Omega),(x,\Hat{\Omega})\big),
	\]
	and hence
	\[
	\varphi_{i,j}(x,\Omega)
	\leq \varphi_{i,j}(x,\Hat{\Omega})
	+\|F^\circ\|\, d_{\mathcal K^m}(\Omega,\Hat{\Omega})
	\quad \text{for all } x\in \mathbb{R}^{dk}.
	\]
	Taking the minimum over $j\in J_k$ on both sides yields
	\[
	\psi_i(x,\Omega)
	\leq \psi_i(x,\Hat{\Omega})
	+\|F^\circ\|\, d_{\mathcal K^m}(\Omega,\Hat{\Omega})
	\quad \text{for all } x\in \mathbb{R}^{dk}.
	\]
	Summing over $i\in I$, we arrive at
	\begin{equation}
		\label{o.v.f_Lipschitz_1}
		f_k(x,\Omega)-f_k(x,\Hat{\Omega})
		\leq m\|F^\circ\|\, d_{\mathcal K^m}(\Omega,\Hat{\Omega})
		\quad \text{for all } x\in \mathbb{R}^{dk}.
	\end{equation}
	
	Now let $\varepsilon>0$ be arbitrary. By \eqref{v_k_definition}, there exists $y\in \mathbb{R}^{dk}$ such that
	\[
	f_k(y,\Hat{\Omega})\leq v_k(\Hat{\Omega})+\varepsilon.
	\]
	On the other hand, again by \eqref{v_k_definition}, we have
	\[
	v_k(\Omega)\leq f_k(y,\Omega).
	\]
	Therefore,
	\begin{equation}
		\label{o.v.f_Lipschitz_2}
		v_k(\Omega)-v_k(\Hat{\Omega})
		\leq f_k(y,\Omega)-f_k(y,\Hat{\Omega})+\varepsilon.
	\end{equation}
	Combining \eqref{o.v.f_Lipschitz_1} and \eqref{o.v.f_Lipschitz_2}, we obtain
	\[
	v_k(\Omega)-v_k(\Hat{\Omega})
	\leq m\|F^\circ\|\, d_{\mathcal K^m}(\Omega,\Hat{\Omega})+\varepsilon.
	\]
	Since $\varepsilon>0$ is arbitrary, it follows that
	\[
	v_k(\Omega)-v_k(\Hat{\Omega})
	\leq m\|F^\circ\|\, d_{\mathcal K^m}(\Omega,\Hat{\Omega}).
	\]
	By interchanging the roles of $\Omega$ and $\Hat{\Omega}$, we also have
	\[
	v_k(\Hat{\Omega})-v_k(\Omega)
	\leq m\|F^\circ\|\, d_{\mathcal K^m}(\Omega,\Hat{\Omega}).
	\]
	Hence,
	\[
	|v_k(\Omega)-v_k(\Hat{\Omega})|
	\leq m\|F^\circ\|\, d_{\mathcal K^m}(\Omega,\Hat{\Omega}).
	\]
	This means that $v_k$ is $m\|F^\circ\|$-Lipschitz continuous on $\mathcal{K}^m$.
\end{proof}

\section{Stability of solution mappings}
\label{solu}
In this subsection, we investigate several properties of both the global and local solution mappings.

\subsection{Global solution mappings}

Since $(\mathcal{K}^m,d_{\mathcal{K}^m})$; see \eqref{dkm}, is the parameter space of problem~\eqref{Generalized_Multi_Source_Weber_Problem}, we now recall several notions from set-valued analysis that will be used in the subsequent analysis.

Let $G:(\mathcal{K}^m,d_{\mathcal{K}^m})\rightrightarrows \mathbb{R}^{dk}$ be a set-valued mapping. The \textit{domain} and \textit{graph} of $G$ are defined, respectively, by
\[
\dom(G)=\left\{\Omega\in\mathcal{K}^m \mid G(\Omega)\neq\emptyset\right\},
\]
and
\[
\gph(G)=\left\{(x,\Omega)\in\mathcal{X}\mid x\in G(\Omega)\right\}.
\]

\begin{definition}
	Let $G:(\mathcal{K}^m,d_{\mathcal{K}^m})\rightrightarrows \mathbb{R}^{dk}$ be a set-valued mapping and let $\Hat \Omega\in \dom(G)$. We say that
	\begin{enumerate}
		\item  $G$ is \textit{closed} at $\Hat \Omega$ if for every sequence
		\[
		(\Omega_n,x_n)\in \gph(G)
		\]
		such that
		\[
		\Omega_n\to \Hat \Omega\quad \text{in }(\mathcal{K}^m,d_{\mathcal{K}^m})
		\quad \text{and} \quad
		x_n\to \overline{x}\quad \text{in }\mathbb{R}^{dk},
		\]
		one has
		\[
		\overline{x}\in G(\Hat \Omega).
		\]
		
		\item $G$ is \textit{compact-valued} on $\mathcal{K}^m$ if $G(\Omega)$ is a compact subset of $\mathbb{R}^{dk}$ for every $\Omega\in \mathcal{K}^m$.
		\item We say that $G$ is \textit{upper semicontinuous} at $\Hat \Omega$ if for every open set $V\subset \mathbb{R}^{dk}$ satisfying
		\[
		G(\Hat \Omega)\subset V,
		\]
		there exists a neighborhood $U$ of $\Hat \Omega$ such that
		\[
		G(\Omega)\subset V \quad \text{for all } \Omega\in U.
		\]
		
		\item $G$ is \textit{inner semicontinuous} at $(\Hat \Omega,\overline{x})\in \gph(G)$ if for every open set $V\subset \mathbb{R}^{dk}$ with $\overline{x}\in V$, there exists a neighborhood $U$ of $\Hat \Omega$ such that
		\[
		G(\Omega)\cap V\neq \emptyset \quad \text{for all } \Omega\in U.
		\]
	\end{enumerate}
\end{definition}

\begin{definition}
	\label{G.S.Mapping_1}
	The set-valued mapping $S_k:(\mathcal{K}^m,d_{\mathcal{K}^m})
	\rightrightarrows \mathbb{R}^{dk}$, defined by
	\begin{equation}
		\label{g.l.b_S_k_definition}
		S_k(\Omega)=\left\{x\in\mathbb{R}^{dk}\mid f_k(x,\Omega)=v_k(\Omega)\right\},
		\quad \Omega=(\Omega^1,\dots,\Omega^m)\in\mathcal{K}^m,
	\end{equation}
	is called the \textit{global solution mapping} of problem
	\eqref{Generalized_Multi_Source_Weber_Problem}.
\end{definition}

\begin{proposition}
	\label{G.S.Mapping_3}
	$S_k$ is closed at any $\Omega\in \mathcal{K}^m$. 
\end{proposition}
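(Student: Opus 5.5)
The plan is a direct sequential argument that combines the joint Lipschitz continuity of $f_k$ (Theorem~\ref{O.V.F_Lipschitz_3}) with the Lipschitz continuity of the optimal value function $v_k$ (Theorem~\ref{O.V.F_Lipschitz_4}). Fix $\Hat\Omega\in\mathcal{K}^m$. By Theorem~\ref{G.O.Solution_2}, $S_k(\Hat\Omega)\neq\emptyset$, so $\Hat\Omega\in\dom(S_k)$. Take a sequence $(\Omega_n,x_n)$ with $x_n\in S_k(\Omega_n)$, $\Omega_n\to\Hat\Omega$ in $(\mathcal{K}^m,d_{\mathcal{K}^m})$ and $x_n\to\overline{x}$ in $\mathbb{R}^{dk}$. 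By \eqref{g.l.b_S_k_definition}, membership $x_n\in S_k(\Omega_n)$ means exactly
\[
f_k(x_n,\Omega_n)=v_k(\Omega_n)\quad\text{for all } n.
\]
We must show $f_k(\overline{x},\Hat\Omega)=v_k(\Hat\Omega)$.

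First, by \eqref{dX}, $d_{\mathcal X}\big((x_n,\Omega_n),(\overline{x},\Hat\Omega)\big)=\|x_n-\overline{x}\|+d_{\mathcal K^m}(\Omega_n,\Hat\Omega)\to 0$. Theorem~\ref{O.V.F_Lipschitz_3} then gives
\[
|f_k(x_n,\Omega_n)-f_k(\overline{x},\Hat\Omega)|\le m\|F^\circ\|\,d_{\mathcal X}\big((x_n,\Omega_n),(\overline{x},\Hat\Omega)\big)\to 0 .
\]
Second, Theorem~\ref{O.V.F_Lipschitz_4} gives $|v_k(\Omega_n)-v_k(\Hat\Omega)|\le m\|F^\circ\|\,d_{\mathcal K^m}(\Omega_n,\Hat\Omega)\to0$.

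Passing to the limit in $f_k(x_n,\Omega_n)=v_k(\Omega_n)$ therefore yields $f_k(\overline{x},\Hat\Omega)=v_k(\Hat\Omega)$, that is, $\overline{x}\in S_k(\Hat\Omega)$.

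No step is a real obstacle; the only points needing care are bookkeeping. We should note that $(\overline{x},\Hat\Omega)\in\mathcal{X}$, so the Lipschitz estimates apply, and that the joint estimate in Theorem~\ref{O.V.F_Lipschitz_3} is what allows $x$ and $\Omega$ to vary simultaneously. If one prefers to avoid invoking continuity of $v_k$, there is an alternative route. For any $z\in\mathbb{R}^{dk}$ we have $f_k(x_n,\Omega_n)\le f_k(z,\Omega_n)$, and letting $n\to\infty$ with Theorem~\ref{O.V.F_Lipschitz_3} on both sides gives $f_k(\overline{x},\Hat\Omega)\le f_k(z,\Hat\Omega)$ for every $z$. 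This again yields $\overline{x}\in S_k(\Hat\Omega)$.
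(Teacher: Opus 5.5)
Your proof is correct and matches the paper's argument. Both pass to the limit in $f_k(x_n,\Omega_n)=v_k(\Omega_n)$ using the Lipschitz continuity of $f_k$ on $\mathcal{X}$ (Theorem~\ref{O.V.F_Lipschitz_3}) and of $v_k$ on $\mathcal{K}^m$ (Theorem~\ref{O.V.F_Lipschitz_4}); your two extras, that $\Hat\Omega\in\dom(S_k)$ by Theorem~\ref{G.O.Solution_2} and that continuity of $f_k$ alone suffices via $f_k(x_n,\Omega_n)\le f_k(z,\Omega_n)$, are both valid.
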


\begin{proof}
	Let $(x,\Omega)\in\mathcal X$ and let $\{(x_n,\Omega_n)\}\subset \gph(S_k)$ be a sequence such that
	\[
	(x_n,\Omega_n)\longrightarrow{}(x,\Omega).
	\]
	Since $(x_n,\Omega_n)\in \gph(S_k)$, one has
	\[
	f_k(x_n,\Omega_n)=v_k(\Omega_n)\quad \text{for all } n\in\mathbb N.
	\]
	It follows from Theorem \ref{O.V.F_Lipschitz_3} and Theorem \ref{O.V.F_Lipschitz_4} that
	$f_k$ is continuous on $(\mathcal X,d_{\mathcal X})$ and $v_k$ is continuous on
	$(\mathcal K^m,d_{\mathcal K^m})$. Therefore,
	\[
	f_k(x,\Omega)
	=\lim_{n\to\infty} f_k(x_n,\Omega_n)
	=\lim_{n\to\infty} v_k(\Omega_n)
	=v_k(\Omega).
	\]
	Hence, $x\in S_k(\Omega)$. The proof is complete.
\end{proof}

\begin{theorem}
	\label{G.S.Mapping_4}
	Let $\Hat\Omega\in \mathcal{K}^m$ and let $U_0$ be a neighborhood of $\Hat\Omega$. Suppose that there exists a compact set $W\subset \mathbb{R}^{dk}$ such that
	\[
	S_k(\Omega)\subset W \quad \text{for all } \Omega\in U_0.
	\]
	Then the global solution mapping $S_k$ is upper semicontinuous at $\Hat\Omega$.
\end{theorem}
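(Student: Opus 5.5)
I will argue by contradiction, combining the closedness of $S_k$ from Proposition~\ref{G.S.Mapping_3} with the uniform boundedness supplied by $W$. The metric space $(\mathcal{K}^m,d_{\mathcal{K}^m})$ is first countable, so it is enough to work with sequences, for instance along the balls of radius $1/n$ around $\Hat\Omega$.

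Suppose $S_k$ is not upper semicontinuous at $\Hat\Omega$. Then there is an open set $V\subset\mathbb{R}^{dk}$ with $S_k(\Hat\Omega)\subset V$ such that no neighborhood of $\Hat\Omega$ works for $V$. Taking the neighborhoods $U_0\cap\{\Omega\mid d_{\mathcal{K}^m}(\Omega,\Hat\Omega)<1/n\}$, I obtain for each $n$ a tuple $\Omega_n\in U_0$ with $d_{\mathcal{K}^m}(\Omega_n,\Hat\Omega)<1/n$ and a point $x_n\in S_k(\Omega_n)\setminus V$. Such a point exists because $S_k(\Omega_n)\not\subset V$.

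Since $\Omega_n\in U_0$, the hypothesis gives $x_n\in S_k(\Omega_n)\subset W$. As $W$ is compact, I can pass to a subsequence and assume $x_n\to\overline{x}$ in $\mathbb{R}^{dk}$. The points $x_n$ all lie in the closed set $\mathbb{R}^{dk}\setminus V$, so the limit satisfies $\overline{x}\notin V$. On the other hand, $(x_n,\Omega_n)\in\gph(S_k)$, $\Omega_n\to\Hat\Omega$ and $x_n\to\overline{x}$. Proposition~\ref{G.S.Mapping_3} then yields $\overline{x}\in S_k(\Hat\Omega)\subset V$, which contradicts $\overline{x}\notin V$. Hence $S_k$ is upper semicontinuous at $\Hat\Omega$. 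Note that $\Hat\Omega\in\dom(S_k)$ by Theorem~\ref{G.O.Solution_2}, so the definition applies.

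There is no serious obstacle. The only point needing care is the sequence extraction: the neighborhoods must be chosen inside $U_0$, so that the containment in $W$ applies to every $\Omega_n$. The closedness of the graph is already available from Proposition~\ref{G.S.Mapping_3}, which in turn rests on the continuity of $f_k$ and $v_k$ given by Theorems~\ref{O.V.F_Lipschitz_3} and~\ref{O.V.F_Lipschitz_4}.
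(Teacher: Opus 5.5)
Your proof is correct and follows essentially the same route as the paper. Both argue by contradiction using sequences $\Omega_n\in U_0\cap\{\Theta\mid d_{\mathcal{K}^m}(\Theta,\Hat\Omega)<1/n\}$, extract a convergent subsequence of $x_n\in S_k(\Omega_n)\setminus V$ inside the compact set $W$, and use that $\mathbb{R}^{dk}\setminus V$ is closed. The only cosmetic difference is how you get $\overline{x}\in S_k(\Hat\Omega)$: you cite Proposition~\ref{G.S.Mapping_3}, whereas the paper re-derives this inline from the continuity of $f_k$ and $v_k$.
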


\begin{proof}
	Suppose on the contrary that $S_k$ is not upper semicontinuous at $\Hat\Omega$. Then, by definition, there exists an open set $V\subset \mathbb{R}^{dk}$ such that
	\[
	S_k(\Hat\Omega)\subset V,
	\]
	but for every neighborhood $U$ of $\Hat\Omega$, one can find $\Omega\in U$ such that
	\[
	S_k(\Omega)\not\subset V.
	\]
	
	For each $n\in\mathbb{N}$, define
	\[
	U_n=\mathbb{B}_{d_{\mathcal{K}^m}}\left(\Hat\Omega;\frac{1}{n}\right)\cap U_0,
	\]
	where
	\[
	\mathbb{B}_{d_{\mathcal{K}^m}}\left(\Hat\Omega;\frac{1}{n}\right)
	=\left\{\Theta\in\mathcal{K}^m \mid d_{\mathcal{K}^m}(\Theta,\Hat\Omega)<\frac{1}{n}\right\}.
	\]
	Then $U_n$ is a neighborhood of $\Hat\Omega$. Hence, for each $n\in\mathbb{N}$, there exist $\Omega_n\in U_n$ and $x_n\in S_k(\Omega_n)$ such that
	\[
	x_n\notin V.
	\]
	By construction of $U_n$, it follows that $\Omega_n\to \Hat\Omega$ in $(\mathcal{K}^m,d_{\mathcal{K}^m})$. Since $\Omega_n\in U_0$, we have
	\[
	x_n\in S_k(\Omega_n)\subset W \quad \text{for all } n\in\mathbb{N}.
	\]
	Because $W$ is compact, there exists a subsequence $(x_{n'})$ converging to some $\overline{x}\in W$.
	
	Moreover, since $x_{n'}\in S_k(\Omega_{n'})$, one has
	\[
	f_k(x_{n'},\Omega_{n'})=v_k(\Omega_{n'}) \quad \text{for all } n'.
	\]
	By the continuity of $f_k$ and $v_k$, it follows that
	\[
	f_k(\overline{x},\Hat\Omega)
	=\lim_{n'\to\infty} f_k(x_{n'},\Omega_{n'})
	=\lim_{n'\to\infty} v_k(\Omega_{n'})
	=v_k(\Hat\Omega).
	\]
	Thus,
	\[
	\overline{x}\in S_k(\Hat\Omega)\subset V.
	\]
	On the other hand, since $x_{n'}\notin V$ for all $n'$ and $\mathbb{R}^{dk}\setminus V$ is closed, we must have
	\[
	\overline{x}\in \mathbb{R}^{dk}\setminus V,
	\]
	which is a contradiction. Therefore, $S_k$ is upper semicontinuous at $\Hat\Omega$. 
\end{proof}

\color{black}
\begin{proposition}
	\label{G.S.Mapping_5}
	Let $\Hat\Omega\in \mathcal{K}^m $ be arbitrary. If $S_k(\Hat \Omega)$ is compact, then there exists a neighborhood $U$ of $\Hat \Omega$ such that $S_k(\Omega)$ is compact for every $\Omega \in U$.
\end{proposition}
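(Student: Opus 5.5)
The plan is to reduce compactness to a strict inequality between optimal values, and then use the Lipschitz continuity of the optimal value functions to show that this strict inequality persists under small perturbations of the targets. Throughout, I regard the results of Section~\ref{exist} as holding for every fixed $\Omega=(\Omega^1,\dots,\Omega^m)\in\mathcal{K}^m$. This is legitimate because those results only use that each $\Omega^i$ is nonempty and compact, and that \eqref{m_k_condition} holds, which involves only $m$ and $k$.

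\emph{Case $k=1$.} By Proposition~\ref{temp_label_1}, $S_1(\Omega)$ is compact for every $\Omega\in\mathcal{K}^m$. So we may take $U=\mathcal{K}^m$, and nothing remains to prove.

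\emph{Case $k\ge 2$.} Write $v_k(\Omega)$ and $v_{k-1}(\Omega)$ for the optimal values of problem \eqref{Generalized_Multi_Source_Weber_Problem} with $k$ and $k-1$ centers and target family $\Omega$, as in \eqref{v_k_definition}.
\begin{enumerate}
\item Theorem~\ref{G.O.Solutions_5}, applied to the data $\hat\Omega$, shows that compactness of $S_k(\hat\Omega)$ is equivalent to $v_k(\hat\Omega)<v_{k-1}(\hat\Omega)$. Set
\[
\delta=v_{k-1}(\hat\Omega)-v_k(\hat\Omega)>0 .
\]
\item Theorem~\ref{O.V.F_Lipschitz_4} is proved for an arbitrary number of centers, so it applies with $k$ and with $k-1\ge 1$. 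Hence both $v_k$ and $v_{k-1}$ are $m\|F^\circ\|$-Lipschitz on $(\mathcal{K}^m,d_{\mathcal{K}^m})$.
\item Take
\[
U=\Bigl\{\Omega\in\mathcal{K}^m \;\Big|\; d_{\mathcal{K}^m}(\Omega,\hat\Omega)<\frac{\delta}{2m\|F^\circ\|}\Bigr\},
\]
which is a neighborhood of $\hat\Omega$. For $\Omega\in U$ the Lipschitz bounds give
\[
v_{k-1}(\Omega)-v_k(\Omega)\;\ge\;\delta-2m\|F^\circ\|\,d_{\mathcal{K}^m}(\Omega,\hat\Omega)\;>\;0 .
\]
\item Applying Theorem~\ref{G.O.Solutions_5} again, now to the data $\Omega$, yields that $S_k(\Omega)$ is compact for every $\Omega\in U$.
\end{enumerate}

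The only point needing care is this reuse of results stated for a single fixed family. Theorem~\ref{G.O.Solutions_5} and Theorem~\ref{O.V.F_Lipschitz_4} must be invoked for a general $\Omega$ and with $k-1$ centers. Their proofs use no property of the targets beyond compactness, so the argument above should go through once this is checked.
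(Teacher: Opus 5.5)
Your proof is correct and follows the same route as the paper: Theorem~\ref{G.O.Solutions_5} turns compactness of $S_k(\Omega)$ into the strict gap $v_{k-1}(\Omega)>v_k(\Omega)$, and the Lipschitz continuity of the optimal value functions (Theorem~\ref{O.V.F_Lipschitz_4}) makes this gap persist near $\Hat\Omega$. Your explicit radius $\delta/(2m\|F^\circ\|)$ and your separate treatment of $k=1$ via Proposition~\ref{temp_label_1} are small improvements, since the paper's argument tacitly assumes $k\ge 2$ so that $v_{k-1}$ and Theorem~\ref{G.O.Solutions_5} make sense.
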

\begin{proof}
	Define $v:\mathcal{K}^m \to \R$ by
	\begin{equation}
		\label{G.S.Mapping_5_1}
		v(\Theta)=v_{k-1}(\Theta)-v_k(\Theta) \text{ with } \Theta \in \mathcal{K}^m.
	\end{equation}
	Since $S_k(\Hat \Omega)$ is compact, it follows from Theorem \ref{G.O.Solutions_5} and \eqref{v_k_definition} that
	\begin{equation}
		\label{G.S.Mapping_5_2}
		v(\Hat \Omega)=\min_{y^{-k} \in \mathbb{R}^{d(k-1)}}f_k(y^{-k},\Hat\Omega)-\min_{y \in \mathbb{R}^{dk}}f_k(y,\Hat\Omega)>0.
	\end{equation}
	Fix
	\[
	\varepsilon=\frac{v(\Hat\Omega)}{2}>0.
	\]By Proposition \ref{O.V.F_Lipschitz_4}, the function $v(\cdot)$ is continuous on $(\mathcal{K}^m,d_{\mathcal{K}^m})$. Hence, there exists $\delta(\varepsilon)>0$ such that for every $\Omega \in \mathcal{K}^m$ satisfying
	\[
    d_{\mathcal{K}^m}(\Omega;\Hat\Omega)<\delta(\varepsilon),
    \]
    we have
	\begin{equation}
		\label{G.S.Mapping_5_3}
		\bigl|v(\Omega)-v(\Hat\Omega)\bigr|<\varepsilon. 
	\end{equation}
	Let 
	\[
	U=\mathbb{B}_{d_{\mathcal{K}^m}}\bigl(\Hat\Omega;\delta(\varepsilon)\bigr)=\bigr\{ \Theta\in \mathcal{K}^m\mid d_{\mathcal{K}^m}(\Theta,\Hat\Omega)<\delta(\varepsilon)\bigl\}.
	\]
	Combining \eqref{G.S.Mapping_5_2} and \eqref{G.S.Mapping_5_3}, one obtains
	\[
	v({\Omega})>v(\Hat\Omega)-\varepsilon>0,  \text{ for all } \Omega \in U.
	\]
    From definition \eqref{G.S.Mapping_5_1}, we have
	\[
	v_{k-1}(\Omega)>v_k(\Omega), \quad \text{for all } \Omega \in U.
	\]
	According to Theorem \ref{G.O.Solutions_5}, $S_k(\Omega)$ is compact for all $\Omega \in U$. Therefore, $U$ is the neighborhood of $\Hat\Omega$ that we are looking for, and the proof is complete.
\end{proof}

\subsection{Local Solution Mappings}

\begin{definition}
	\label{L.S.Mapping_1}
	The set-valued mapping $S_k^{\mathrm{loc}}:(\mathcal{K}^m,d_{\mathcal{K}^m})
	\rightrightarrows \mathbb{R}^{dk}$, defined by
	\begin{equation}
		\label{S_kloc_definition}
		S_k^{\mathrm{loc}}(\Omega)=
		\left\{
		x\in\mathbb{R}^{dk}\ \middle|\ 
		\exists\,\varepsilon>0 \text{ such that }
		f_k(x,\Omega)\leq f_k(y,\Omega)
		\text{ for all } y\in \mathbb{B}(x,\varepsilon)
		\right\},
	\end{equation}
	is called the \textit{local solution mapping} of problem~\eqref{Generalized_Multi_Source_Weber_Problem}. 
\end{definition}

\begin{proposition}
	\label{L.S.Mapping_2}
	Let $\Hat x\in S_{k}^\mathrm{loc}(\Hat{\Omega})$ be a strict local optimal solution of problem~\eqref{Generalized_Multi_Source_Weber_Problem} at $\Hat{\Omega}\in\mathcal{K}^m$. Then the local solution mapping $S_{k}^\mathrm{loc}$ is inner semicontinuous at $(\Hat{\Omega},\Hat{x})$.
\end{proposition}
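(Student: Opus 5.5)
The plan is the standard argument that a strict local minimizer persists under perturbation: minimize over a small compact ball around $\Hat x$, then show the minimizer lies in the open ball. Let $V$ be an open set containing $\Hat x$. Since $\Hat x$ is a strict local optimal solution at $\Hat\Omega$, there is $\varepsilon_0>0$ with $f_k(\Hat x,\Hat\Omega)<f_k(y,\Hat\Omega)$ for all $y\in\mathbb{B}(\Hat x;\varepsilon_0)\setminus\{\Hat x\}$. Choose $0<r<\varepsilon_0$ so small that $\mathbb{B}[\Hat x;r]\subset V$. Then the sphere $\Sigma=\{y\mid \|y-\Hat x\|=r\}$ is compact, and $f_k(\cdot,\Hat\Omega)$ is continuous (Theorem~\ref{O.V.F_Lipschitz_3}). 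Hence
\[
\eta=\min_{y\in\Sigma} f_k(y,\Hat\Omega)-f_k(\Hat x,\Hat\Omega)>0 .
\]

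Next I would use the Lipschitz estimate of Theorem~\ref{O.V.F_Lipschitz_3} in the $\Omega$-variable. For fixed $y$ it gives $|f_k(y,\Omega)-f_k(y,\Hat\Omega)|\le m\|F^\circ\|\,d_{\mathcal K^m}(\Omega,\Hat\Omega)$, uniformly in $y$. Let $U=\{\Omega\mid d_{\mathcal K^m}(\Omega,\Hat\Omega)<\eta/(2m\|F^\circ\|)\}$. For every $\Omega\in U$ and every $y\in\Sigma$,
\[
f_k(y,\Omega)>f_k(y,\Hat\Omega)-\tfrac{\eta}{2}\ge f_k(\Hat x,\Hat\Omega)+\tfrac{\eta}{2}>f_k(\Hat x,\Omega).
\]

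Now fix $\Omega\in U$. By the Weierstrass theorem, $f_k(\cdot,\Omega)$ attains its minimum on the compact ball $\mathbb{B}[\Hat x;r]$ at some point $x_\Omega$. The display above shows that the minimum value is at most $f_k(\Hat x,\Omega)$, which is strictly less than every value on $\Sigma$. Hence $x_\Omega\notin\Sigma$, so $x_\Omega$ lies in the open ball $\mathbb{B}(\Hat x;r)$.

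It follows that some small ball $\mathbb{B}(x_\Omega;\varepsilon)$ is contained in $\mathbb{B}[\Hat x;r]$. Since $x_\Omega$ minimizes $f_k(\cdot,\Omega)$ on that larger set, it also minimizes it on $\mathbb{B}(x_\Omega;\varepsilon)$. By \eqref{S_kloc_definition}, this means $x_\Omega\in S_k^{\rm loc}(\Omega)$. Because $x_\Omega\in\mathbb{B}[\Hat x;r]\subset V$, we get $S_k^{\rm loc}(\Omega)\cap V\neq\emptyset$ for every $\Omega\in U$, which is inner semicontinuity at $(\Hat\Omega,\Hat x)$.

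The step that needs the most care is the uniform separation $\eta>0$. It relies on strictness only on the punctured ball, together with compactness of $\Sigma$. For this we must pick $r<\varepsilon_0$ strictly, so that $\Sigma$ lies inside the region where strictness holds. With that choice, the remaining steps are direct consequences of the uniform Lipschitz bound.
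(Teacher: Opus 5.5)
Your proof is correct, but it takes a different route from the paper's: yours is direct and quantitative, the paper's is by contradiction.

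The paper takes $\Omega_n\to\Hat\Omega$ with $S_k^{\rm loc}(\Omega_n)\cap V=\emptyset$ and picks minimizers $x_n$ of $f_k(\cdot,\Omega_n)$ over $\mathbb{B}[\Hat x;\varepsilon/2]$. It extracts a convergent subsequence and passes to the limit using only joint continuity of $f_k$ on $\mathcal{X}$. Uniqueness of the minimizer at $\Hat\Omega$ then gives $x_{n'}\to\Hat x$, so eventually $x_{n'}$ lies in the open ball and is a local minimizer.

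You replace this sequential-compactness argument with a strict separation on the sphere $\Sigma$. Strictness and compactness of $\Sigma$ give the gap $\eta>0$. The estimate $|f_k(y,\Omega)-f_k(y,\Hat\Omega)|\le m\|F^\circ\|\,d_{\mathcal{K}^m}(\Omega,\Hat\Omega)$ from Theorem~\ref{O.V.F_Lipschitz_3}, uniform in $y$, keeps the values on $\Sigma$ strictly above $f_k(\Hat x,\Omega)$. This forces every minimizer over the closed ball into the open ball.

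Your version gains three things:
\begin{itemize}
\item an explicit neighborhood $U$ of radius $\eta/(2m\|F^\circ\|)$;
\item no contradiction or subsequence extraction;
\item an explicit justification of the interiority step, which the paper leaves implicit in the convergence $x_{n'}\to\Hat x$.
\end{itemize}
The paper's version is the standard Berge-type argument. It needs only joint continuity of $f_k$, not a bound uniform in $x$, so it would survive without a Lipschitz estimate in the parameter. Your argument also needs uniformity only on the compact set $\Sigma\cup\{\Hat x\}$, where joint continuity alone would supply it.
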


\begin{proof}
	Suppose on the contrary that $S_k^{{\rm loc}}$ is not inner semicontinuous at $(\Hat\Omega;\Hat{x})\in \gph(S_k^{{\rm loc}})$. Then, by definition, there exists an open neighborhood $V$ of $\Hat{x}$ such that for every neighborhood $U$ of $\Hat\Omega$, there exists some $\Omega\in U$ satisfying
	\[
	S_k^{{\rm loc}}(\Omega)\cap V=\emptyset.
	\]
	For each $n\in\mathbb N$, let
	\[
	U_n=\mathbb{B}_{d_{\mathcal{K}^m}}\left[\Hat\Omega;\frac{1}{n}\right]:=\biggr\{ \Omega\in \mathcal{K}^m\mid d_{\mathcal{K}^m}(\Omega,\Hat\Omega)\leq\frac{1}{n}\biggl\}.
	\]
	Then each $U_n$ is a neighborhood of $\Hat\Omega$, and we can find a sequence $\{\Omega_n\}\subset \mathcal{K}^m$ such that $\Omega_n\in U_n$, $\Omega_n\to \Hat\Omega$, and
	\begin{equation}
		\label{L.S.Mapping_2_1}
		S_k^{{\rm loc}}(\Omega_n)\cap V=\emptyset
		\quad \text{for all } n\in\mathbb N.
	\end{equation}
	
	Since $\Hat x$ is a strict local optimal solution at $\Hat\Omega$, there exists $\varepsilon>0$ such that $\Hat x$ is the unique optimal solution of $f_k(\cdot,\Hat\Omega)$ on $\mathbb B\left(\Hat x;\varepsilon\right)$. Without loss of generality, we may choose $\varepsilon$ small enough so that
	\[
	\mathbb B\left[\Hat x;\frac{\varepsilon}{2}\right]\subset V.
	\]
	For each $n\in\mathbb N$, consider the restricted optimization problem
	\[
	\min\left\{f_k(x,\Omega_n)\mid x\in\mathbb B\left[\Hat x;\frac{\varepsilon}{2}\right]\right\}.
	\]
	Since $f_k(\cdot,\Omega_n)$ is continuous and $\mathbb B\left[\Hat x;\frac{\varepsilon}{2}\right]$ is compact, the Weierstrass extremum theorem ensures the existence of a point
	\[
	x_n\in \mathbb B\left[\Hat x;\frac{\varepsilon}{2}\right]
	\]
	such that
	\begin{equation}
		\label{L.S.Mapping_2_2}
		\min_{x\in\mathbb B\left[\Hat x;\frac{\varepsilon}{2}\right]}f_k(x,\Omega_n)
		=
		f_k(x_n,\Omega_n).
	\end{equation}
	
	By the compactness of $\mathbb B\left[\Hat x;\frac{\varepsilon}{2}\right]$, there exists a subsequence $\{x_{n'}\}$ converging to some $\overline{x}\in \mathbb B\left[\Hat x;\frac{\varepsilon}{2}\right]$. Since $x_{n'}$ is an optimal solution of $f_k(\cdot,\Omega_{n'})$ on $\mathbb B\left[\Hat x;\frac{\varepsilon}{2}\right]$, we have
	\[
	f_k(x_{n'},\Omega_{n'})
	\leq f_k(x,\Omega_{n'})
	\quad \text{for all } x\in\mathbb B\left[\Hat x;\frac{\varepsilon}{2}\right].
	\]
	Because $f_k$ is continuous on $\mathcal X$, letting $n'\to\infty$ yields
	\[
	f_k(\overline{x},\Hat\Omega)\leq f_k(x,\Hat\Omega)
	\quad \text{for all } x\in\mathbb B\left[\Hat x;\frac{\varepsilon}{2}\right].
	\]
	This means that $\overline{x}$ is an optimal solution of $f_k(\cdot,\Hat\Omega)$ on $\mathbb B\left[\Hat x;\frac{\varepsilon}{2}\right]$. Since $\Hat x$ is the unique optimal solution of $f_k(\cdot,\Hat\Omega)$ on $\mathbb B\left[\Hat x;\varepsilon\right]$, we must have
	\[
	\overline{x}=\Hat x.
	\]
	Hence $x_{n'}\to \Hat x$, and so
	\[
	x_{n'}\in \mathbb B\left[\Hat x;\frac{\varepsilon}{2}\right]\subset V
	\quad \text{for all } n' \text{ sufficiently large.}
	\]
	Together with \eqref{L.S.Mapping_2_2}, this implies that $x_{n'}$ is a local optimal solution of $f_k(\cdot,\Omega_{n'})$ for all $n'$ sufficiently large, that is,
	\[
	x_{n'}\in S_k^{{\rm loc}}(\Omega_{n'}).
	\]
	Therefore,
	\[
	S_k^{{\rm loc}}(\Omega_{n'})\cap V\neq\emptyset
	\quad \text{for all }n' \text{ sufficiently large,}\]
	 which contradicts \eqref{L.S.Mapping_2_1}. The proof is complete.
\end{proof}

\section{Conclusion}\label{conclu}

We have studied the generalized multi-source Weber problem with set-valued targets via minimal time functions. Our analysis established the existence of optimal solutions, several qualitative properties of the solution sets, Lipschitz continuity of the objective and optimal value functions, and stability properties of the global and local solution mappings. These results contribute to the qualitative theory of generalized location problems and may serve as a basis for future numerical and stability analysis.


\end{document}